\newcommand{\acl}{\operatorname{acl}}
\newtheorem{thm}{Theorem}[section]
\newtheorem{mainthm}{Theorem}
\newtheorem{prop}[thm]{Proposition}
\newtheorem {lem}[thm]{Lemma}
\theoremstyle{remark}
\newtheorem{rem}[thm]{Remark}
\newtheorem{np*}{Non-Proof}
\theoremstyle{definition}
\newtheorem{defn}[thm]{Definition}
\theoremstyle{plain}
\newcommand{\td}{\text{tr.deg.}}
\def\Ind{\setbox0=\hbox{$x$}\kern\wd0\hbox to 0pt{\hss$\mid$\hss} \lower.9\ht0\hbox to 0pt{\hss$\smile$\hss}\kern\wd0}
\def\Notind{\setbox0=\hbox{$x$}\kern\wd0\hbox to 0pt{\mathchardef \nn=12854\hss$\nn$\kern1.4\wd0\hss}\hbox to 0pt{\hss$\mid$\hss}\lower.9\ht0 \hbox to 0pt{\hss$\smile$\hss}\kern\wd0}
\def\ind{\mathop{\mathpalette\Ind{}}}
\def\nind{\mathop{\mathpalette\Notind{}}}
\numberwithin{equation}{section}
\newcommand{\m}{\mathbb }
\newcommand{\mc}{\mathcal }
\newcommand{\tp}{\operatorname{tp}}
\begin{document}

\title{When any $4$ solutions are independent}
\author{James Freitag}
\date{\today} 

\begin{abstract}
We show that if any four distinct solutions of a rational difference equation are algebraically independent, then any number of distinct solutions to the equation are independent. A nontrivial variant of this result is given for autonomous difference equations or algebraic dynamical systems, where we show the degree of nonminimality is at most two. The results have a natural interpretation in terms of invariant or periodic subvarieties of algebraic dynamical systems and $\sigma$-varieties. Surprisingly, the proofs of these results rely on the classification of finite simple groups. 
\end{abstract}

\maketitle

\section{Introduction}
Let $k$ be an algebraically closed field of characteristic zero. An \emph{algebraic dynamical system over $k$} is a pair $(X, \phi )$, where $X$ is a variety over $k$ and $\phi :X \rightarrow X$ is a dominant rational map over $k$. More generally, when $\sigma$ is an endomorphism of $k$, a \emph{$\sigma$-variety over $(k,\sigma)$} is a pair $(X,\phi)$ where $X$ is a $k$-variety and $\phi:X \rightarrow X^\sigma$ is a dominant rational map over $k$. Sometimes we refer to algebraic dynamical systems as the \emph{autonomous case} of rational $\sigma$-varieties, following \cite{ChHr-AD1,chatzidakis2008difference,kamensky2024binding}. Most sources consider the special case in which $\phi$ in the above definitions is assumed to be a morphism (see e.g. \cite{pink2004psi} for the nonautonomous case). A not necessarily irreducible subvariety $Y \subset X$ is called \emph{weakly invariant} if $\phi (Y) \subset Y^\sigma$ and $Y$ is not contained in the indeterminacy locus of $\phi$. \footnote{Note that many sources use some variation of this terminology. For instance, \cite{AliceTom} calls this notion weak skew invariance.} $Y$ is \emph{invariant} if $\phi (Y) = Y^\sigma .$

Our main results relate to the invariant subvarieties of products of the dynamical system with itself. We denote the $n$-fold product of $X$ equipped with the map $\phi$ on each copy of $X$ by $(X^n,\phi^{\times n})$. Morphisms in this category from $(X , \phi)$ to $(Y,\psi)$ are given by a morphism $f: X \rightarrow Y$ of algebraic varieties such that the following diagram commutes:
\[
\begin{tikzcd}
X \arrow[r, "\phi"] \arrow[d, "f"'] & X^\sigma \arrow[d, "f^\sigma"] \\
 Y \arrow[r, "\psi"'] & Y^\sigma
\end{tikzcd}
\]
where we denote by $f^\sigma$, the function which has $\sigma$ applied to the coefficients of $f$.

Given a $\sigma$-variety, $(X,\phi)$, the $\sigma$-variety $(X^n, \phi^{\times n} )$ always has certain proper irreducible invariant subvarieties, and we next describe these \emph{obvious sources}. Products of invariant subvarieties of $X$ are invariant in $X^n$. The subvariety obtained by simply setting the coordinates of two of the copies of $X$ equal to each other is also clearly invariant. We call such a subvariety a \emph{generalized diagonal}. 

More generally, when $X$ is defined over the fixed field of $\sigma $ and $g = f^{\sigma^n}$, then the graph of $f^{\sigma^{n-1} } \circ f^ {\sigma^{n-2} } \circ \ldots \circ f$ is invariant for $(X,f) \times (X,g)$. Medvedev and Scanlon \cite{AliceTom} call this skew-iteration. Since we are considering the general case of subvarieties of products of $(X,f)$, when our system is nonautonomous, this won't be a source of invariant subvarieties (besides the generalized diagonal) except under very specific circumstances when the action of $\sigma$ on the coefficients of $f$ has a finite orbit. When $f$ has coefficients in the fixed field itself, however, the graphs of iterates of $f$ are invariant on $(X,f) \times (X,f)$. For certain special choices of $f$, one might have additional less obvious invariant subvarieties. For instance, $f$ might commute with some other map or may be written as a composition in some nontrivial way. 

We are interested in the existence of irreducible invariant subvarieties of $X^n$ which do not arise from these obvious sources (or their proper subvarieties). So, we call invariant subvariety of $Y \subset X^n$ is called \emph{non-diagonal} if it is 

\begin{itemize}
    \item not contained in a generalized diagonal 
    \item for each $i$, the projection map to the $i^{th}$ copy of $V$, $\pi_i : Y \rightarrow V$ is dominant
\end{itemize} Our main result governs the appearance of non-diagonal invariant subvarieties in the nonautonomous case.

\begin{mainthm} \label{mainthm} Fix $(X,f)$ a $\sigma$-variety. Suppose that for some $n$, $X^n$ has a \emph{non-diagonal} proper irreducible invariant subvariety. Then there is already a non-diagonal proper irreducible invariant subvariety of $X^4$. 
\end{mainthm}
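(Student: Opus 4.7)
The plan is to pass to the model theory of the theory $\mathrm{ACFA}$ of existentially closed difference fields, where non-diagonal invariant subvarieties translate into forking dependencies between generic realizations. Concretely, a non-diagonal proper irreducible invariant subvariety of $X^n$ corresponds, via its generic point over $k$, to a tuple $(a_1,\ldots,a_n)$ of pairwise distinct realizations of the generic type $p$ of $(X,f)$ which fails to be mutually independent over $k$ in the forking sense. Conversely, starting from such a forking-dependent pairwise distinct tuple, the Zariski $\sigma$-closure of its locus, after passing to an appropriate irreducible component, is an invariant subvariety witnessing the non-diagonality. With this translation in hand, the theorem reduces to the claim: if some collection of $n$ pairwise distinct realizations of $p$ is forking-dependent over $k$, then one can find such a collection with $n = 4$.

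The natural invariant controlling this is the \emph{degree of nonminimality} $\mathrm{nm}(p)$ introduced by Freitag-Moosa: the minimum $d$ for which $p$ admits a nontrivial (non-almost-algebraic) forking extension over some $d$ realizations of $p$. The key lemma I aim to establish is $\mathrm{nm}(p) \leq 3$ for the generic type of a $\sigma$-variety, from which the theorem follows easily. Given the bound, one picks three independent realizations $b_1,b_2,b_3$ of $p$ over which $p$ forks nontrivially, takes a fourth realization $b_4$ of the forking extension, and arranges the $b_i$ to be pairwise distinct (perturbing $b_4$ by an independent generic if necessary). The tuple $(b_1,b_2,b_3,b_4)$ then gives a non-diagonal invariant subvariety of $X^4$.

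The real work lies in the lemma. The strategy follows the Freitag-Moosa approach in $\mathrm{DCF}_0$ but must be adapted to $\mathrm{ACFA}$ via the Chatzidakis-Hrushovski trichotomy. One reduces to analyzing minimal types and splits into three cases: trivial, locally modular nontrivial, and non-orthogonal-to-the-fixed-field. The trivial case is essentially immediate because pairwise distinct realizations of a trivial type are automatically independent. The non-orthogonal-to-the-fixed-field case reduces to algebraic-geometric statements about the fixed field. The main obstacle, and the step requiring the Classification of Finite Simple Groups, is the locally modular nontrivial (group-like) case. Here Hrushovski's group configuration produces a definable group, and the bound $\mathrm{nm}(p) \leq 3$ becomes a statement about how many independent generic coset parameters are needed to witness a nontrivial definable-subgroup constraint. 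CFSG enters through control of the subgroup structure and generation properties of the (quasi)simple sections of these definable groups, yielding the required uniform bound. This is the delicate core of the argument; everything else is fairly mechanical translation and reduction.
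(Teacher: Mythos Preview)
Your translation to $\mathrm{ACFA}$ and the condition $C_n$ is correct, but from there the proposal goes off track in two significant ways.

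First, the invariant you name is the wrong one for this theorem. The degree of nonminimality $\mathrm{nm}(p)$ governs \emph{nonalgebraic} forking extensions; it is the tool for Theorems~\ref{nmdeg} and~\ref{nmdegnon} about nonminimal forking subvarieties. Theorem~\ref{mainthm} is about \emph{any} forking dependence among distinct realizations, including algebraic ones, so the relevant condition is the paper's $C_n$, not $\mathrm{nm}(p)$. If $p$ happens to be minimal, $\mathrm{nm}(p)$ tells you nothing, yet $C_n$ can still fail. Your deduction ``$\mathrm{nm}(p)\le 3$ implies the theorem'' does not cover that case.

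Second, and more seriously, you have inverted which case is hard and where the Classification of Finite Simple Groups enters. The locally modular (1-based) case is \emph{not} the obstacle: once $p$ is 1-based, canonical bases of forking extensions are algebraic over a single realization, so a failure of $C_n$ collapses immediately to a failure of $C_4$ (this is Lemma~\ref{any3modular}). No group configuration, no CFSG. The genuinely delicate case is the one you dismiss as ``reducing to algebraic-geometric statements about the fixed field'': when $p$ is qf-$\mathcal{C}$-internal and weakly $\mathcal{C}$-orthogonal, one passes to the binding group $G=\mathrm{Aut}_{qf}(p/\mathcal{C})$, which acts definably in the pseudofinite field $\mathcal{C}$. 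Assuming $C_4$ forces this action to be $4$-transitive, and one rules this out via the classification of $3$-transitive finite permutation groups (Theorem~\ref{3fin}, which does depend on CFSG) lifted to pseudofinite groups (Lemma~\ref{psuint}). Your proposal would have you attempting a hard CFSG argument in the easy case while hand-waving past the case that actually carries the content.
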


As we've mentioned, in the autonomous case, the additional source of invariant subvarieties given by iterates of $f$ means that the previous result is trivial since already the graph of an iterate of $f$ is non-diagonal on $X^2$. We will prove a result about the appearance of an interesting invariant subvarieties in the autonomous case, but first we need some notation. 

Note that all of the \emph{obvious} invariant subvarieties from our above sources share a common feature - let $\pi_{\underline{i}}: X^n \rightarrow X^{n-1}$ denote projection away from the $i^{th}$ copy of $X$. If $Y$ is an invariant subvariety which projects dominantly onto each copy of $X$ and is formed by intersecting generalized diagonals and graphs of iterates of $f$ as above, then the fiber above a generic point in each projection $\pi_{\underline{i}}$ will either be zero-dimensional or of $\dim X$. 

We call a non-diagonal irreducible invariant subvariety $Y \subset X^n$ a \emph{nonminimal forking} subvariety if for some $i = 1, \ldots , n$, the relative dimension of $\pi_{\underline{n}}$ on $Y$ is positive, but less than $\dim X$. That is, $0<dim (\pi_{\underline{n}}^{-1}(Z)  / Z) < \dim X ,$
where $Z=\pi_{\underline{n}}(Y).$ Of course, this notion only makes sense for $X$ of dimension at least $2$. By their dimension properties - nonminimal forking subvarieties can not be built from any of the obvious sources of invariant subvarieties we described above. A nonminimal forking subvariety of $X^n$ corresponds to a forking extension of a generic solution of the difference equation given by $(X,\phi)$, a perspective we will later explain and exploit. 

\begin{mainthm} \label{nmdeg}
Suppose $(X, \phi)$ is an autonomous algebraic dynamical system. If $(X^n, \phi^{\times n})$ has a nonminimal forking subvariety for some $n$, then already $(X^3, \phi ^{\times 3})$ has a nonminiml forking subvariety. 
\end{mainthm}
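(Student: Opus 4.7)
The strategy is to pass to the model theory of ACFA (the model-companion of difference fields, after Chatzidakis--Hrushovski) and prove the bound in terms of the degree of nonminimality of a type, whose proof in the analogous setting of differentially closed fields (Freitag--Moosa) goes through the classification of finite simple groups.

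First I would translate Theorem~\ref{nmdeg} into a model-theoretic statement.  Working in a saturated $(\mathcal{U},\sigma) \models \mathrm{ACFA}$, the autonomous $\sigma$-variety $(X,\phi)$ corresponds to the definable set $\{x \in X(\mathcal{U}) : \sigma(x) = \phi(x)\}$, whose generic type I call $p$.  A non-diagonal irreducible invariant subvariety $Y \subset X^n$ that projects dominantly to each factor is exactly the locus of $\tp(a_1,\ldots,a_n/\emptyset)$ for a tuple of realizations of $p$ with a nontrivial algebraic dependence.  The \emph{nonminimal forking} condition -- that the generic fiber of $\pi_{\underline{n}}$ on $Y$ has dimension strictly between $0$ and $\dim X$ -- translates to: $a_n$ is a forking extension of $p$ over $\{a_1,\ldots,a_{n-1}\}$ with $0 < \mathrm{SU}(a_n/a_1,\ldots,a_{n-1}) < \mathrm{SU}(p)$.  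Hence Theorem~\ref{nmdeg} amounts to showing $\nmdeg(p) \leq 2$, where $\nmdeg(p)$ is the least $k$ for which $p$ has a forking extension over $k$ independent realizations of itself.

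I would then reduce to the case where $p$ is itself a \emph{minimal} type in ACFA, by passing to a minimal type appearing in the SU-rank analysis of the forking extension and observing that its degree of nonminimality is no worse than that of $p$.  With this reduction in place, I would invoke the Chatzidakis--Hrushovski trichotomy for minimal types: $p$ is either non-orthogonal to the fixed field, non-orthogonal to the generic type of a definable modular group, or locally modular and trivial.  In the first two cases, the ambient field or algebraic group structure directly exhibits a forking extension over at most two generic realizations, so $\nmdeg(p) \leq 2$ is automatic.

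The main obstacle is the \emph{trivial} case, handled along the Freitag--Moosa lines from DCF$_0$.  The hypothesis $\nmdeg(p) \geq 3$ forces all algebraic dependencies among realizations of $p$ to be generated by binary relations in a highly symmetric way, so the Galois action on a suitable finite set of independent realizations becomes a finite primitive permutation group with strong transitivity properties.  Applying the classification of finite $2$-transitive permutation groups -- a consequence of the classification of finite simple groups -- then yields a contradiction.  The technical crux is to verify that the binding-group theory and elimination of imaginaries available in ACFA (via Chatzidakis--Hrushovski and Kamensky) produce the definable finite permutation groups to which the CFSG input applies; I expect most of the work of the proof to be in carrying out this adaptation of the DCF$_0$ argument to ACFA.
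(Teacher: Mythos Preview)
Your proposal conflates the argument for Theorem~\ref{mainthm} with that for Theorem~\ref{nmdeg}; the paper's proof of Theorem~\ref{nmdeg} does \emph{not} go through the classification of finite simple groups at all, and the structure you outline has two genuine gaps.

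First, the case where $p$ is non-orthogonal to the fixed field $\mc C$ is not ``automatic''. This is precisely where the autonomy hypothesis is used, and the key input is Corollary~B of Bell--Moosa--Satriano (\cite{bell2024invariant}): if some power of an \emph{autonomous} system $(X,\phi)$ admits a nonconstant invariant rational function, then already $(X^2,\phi^{\times 2})$ does. From such a function $f:X^2\to\m A^1$ one manufactures a nonminimal forking subvariety of $X^3$ by equating the invariant function on factors $(1,2)$ with that on factors $(1,3)$. Without Bell--Moosa--Satriano one only gets the Kamensky--Moosa bound of $\dim X+3$, which is exactly the content of Theorem~\ref{nmdegnon}. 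Your plan makes no mention of this ingredient, so the distinction between Theorems~\ref{nmdeg} and~\ref{nmdegnon} would collapse in your argument.

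Second, your treatment of the case orthogonal to $\mc C$ is misdirected. The reduction ``to a minimal type'' does not make sense for degree of nonminimality: a minimal type has no nonalgebraic forking extension, so $\nmdeg$ is not even posed there. What the paper actually does is invoke the Chatzidakis--Hrushovski dichotomy to conclude that any nonalgebraic forking extension $q$ of $p$ is $1$-based, whence the canonical base of $q$ is algebraic over a \emph{single} realization of $q$; this immediately produces a nonminimal forking subvariety of $X^2$. No binding-group or CFSG argument is needed or relevant here --- the Galois/permutation-group machinery you describe lives in the $\mc C$-internal world, which is the opposite case.
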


In the non-autonomous case, we don't know any absolute bound at the moment\footnote{During the preparation of this manuscript in various talks, I conjectured that $\dim X+4$ in the previous theorem can be improved to $4$. In a manuscript in preparation with Jaoui, Karhum\"aki, and Ramsey, we prove this conjecture.}: 

\begin{mainthm} \label{nmdegnon} 
Suppose $(X, \phi)$ is a $\sigma$-variety. If $(X^n, \phi^{\times n})$ has a nonminimal forking subvariety, then already $(X^{\dim X+4}, \phi ^{\times (\dim X+4)})$ has a nonminimal forking subvariety. 
\end{mainthm}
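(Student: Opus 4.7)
The plan is to translate into model theory and argue via a forking-chain combined with Main Theorem~A. Let $p$ denote the generic type of $(X, \phi)$ in the appropriate existentially closed $\sigma$-field extension. A nonminimal forking subvariety of $X^n$ is precisely a configuration of $n$ realizations $a, c_1, \ldots, c_{n-1}$ of $p$ with $a \nind c_1, \ldots, c_{n-1}$ but $a \notin \acl(c_1, \ldots, c_{n-1})$, so Theorem~C amounts to bounding the \emph{degree of nonminimality} of $p$ by $\dim X + 3$.

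First, I would apply Main Theorem~A to produce a non-diagonal proper irreducible invariant subvariety $Y_0 \subset X^4$, corresponding to four pairwise distinct realizations of $p$ tied by a nontrivial dependence. If a generic fiber of $Y_0 \to X^3$ already has dimension strictly between $0$ and $\dim X$ we are done at $n=4$, so the genuine difficulty is the case in which the dependence extracted from Theorem~A is purely \emph{algebraic}, say $a_4 \in \acl(a_1, a_2, a_3)$. To promote this algebraic configuration to a nonminimal one, I would run a forking-chain argument: by hypothesis, \emph{some} nonminimal forking subvariety of $X^N$ exists for large $N$, so $p$ admits at least one nonalgebraic forking extension, and I would iteratively adjoin realizations of $p$ to the parameter set of such an extension---each adjunction reducing the SU-rank of the residual type by exactly one while preserving nonalgebraicity---until the rank of the extension matches the one extracted from $Y_0$. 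Since $p$ has SU-rank at most $\dim X$, this requires at most $\dim X$ new realizations, producing the desired nonminimal forking subvariety in $X^{\dim X + 4}$.

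The main obstacle is exactly this forking-chain step: one must adjoin \emph{realizations of $p$}, not arbitrary parameters, and each adjunction must produce exactly one unit of rank drop while keeping the extension nonalgebraic. In the autonomous setting of Theorem~B one can exploit graphs of iterates of $\phi$ as a canonical source of such parameters, which is what permits the sharper bound $3$ there; in the nonautonomous setting those graphs are unavailable and the extra $\dim X$ in Theorem~C compensates. I expect this step to require canonical-base analysis in the stable theory of $\sigma$-varieties together with the binding-group and CFSG machinery underlying Theorems~A and~B, with the delicate point being the coordination of the rank-reduction procedure with the non-diagonality condition so that the final subvariety is simultaneously non-diagonal and nonminimal forking.
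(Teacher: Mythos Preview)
Your approach diverges substantially from the paper's, and the central ``forking-chain'' step does not work as stated. The paper does not route through Theorem~A at all. Instead it runs the same dichotomy as in the proof of Theorem~B: either $p$ is orthogonal to the fixed field $\mathcal C$ or it is not. In the orthogonal case, any nonalgebraic forking extension is $1$-based (Lemma~\ref{dichot}), so its canonical base is algebraic over a \emph{single} realization, giving a nonminimal forking subvariety already in $X^2$. In the non-orthogonal case one reduces via Lemma~\ref{rationalreduction} to a qf-$\mathcal C$-internal quotient; if the quotient map has positive-dimensional fibers one is done in $X^2$, and otherwise one invokes the Kamensky--Moosa bound to obtain an invariant rational function on $X^{\dim X+3}$, whose generic fibers yield the forking subvariety in $X^{\dim X+4}$. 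The number $\dim X+3$ comes entirely from \cite{kamensky2024binding}, not from any rank-descent on $p$.

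Your forking-chain argument, by contrast, has no mechanism to \emph{reduce} the number of parameter realizations: you start from a nonalgebraic forking extension over $N-1$ realizations and then \emph{adjoin} further realizations, which only increases the exponent. There is no general principle guaranteeing that each new realization of $p$ drops SU-rank by exactly one while preserving nonalgebraicity, and even if there were, the terminal condition ``until the rank matches the one extracted from $Y_0$'' is incoherent, since the dependence coming from $Y_0$ is by your own case assumption \emph{algebraic}, i.e.\ of rank zero. The configuration from Theorem~A and the hypothesized nonalgebraic forking extension are unrelated tuples; nothing in your outline explains how to splice them into a single subvariety of $X^{\dim X+4}$. The missing idea is precisely the orthogonality dichotomy: the $1$-based canonical-base argument is what collapses the parameter count in the hard case, and the binding-group bound of Kamensky--Moosa is what supplies $\dim X+3$ in the internal case.
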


The proofs of the last two results utilize the work of \cite{kamensky2024binding}, where, as in \cite{freitag2021bounding}, the degree of generic transitivity of an automorphism group was related to the problem of weak nonorthogonality of types. Unfortunately, presuming Question 5.7 of \cite{kamensky2024binding} has a positive answer, improving the above bound would require some additional ingredient or approach. Question 5.7 of \cite{kamensky2024binding} does have a positive answer in the case of differential equations - see \cite{freitag2021bounding}. 

Our approach to these results uses the model theory of the model companion of characteristic zero difference fields, which we denote by $ACFA$. Associated with any $\sigma$-variety are the solutions of a difference equation, and the type of a generic solution of this equation is what we call a \emph{rational type}, following \cite{kamensky2024binding}. By $k \langle a \rangle$, we denote the difference field over $k$ generated by $a$, $k (a , \sigma (a), \sigma^2 (a) , \ldots )$. Relative to the theory $ACFA$, forking independence is characterized in terms of transcendence degree, $a \ind _k b$ if and only if $\td _k (k\langle a \rangle ) = \td _{k \langle b \rangle } (k\langle a \rangle )$. 

For rational types, forking dependence has a natural interpretation. Let $a \models p$ and $b \models q$, the rational types of $(V, \phi )$ and $(W, \psi)$. Suppose $a \nind _k b$. Let $U \subset V \times W$ be the zero set of the difference idea $I$ which includes any $\sigma$-polynomial which vanishes at $(a,b)$. Then $U$ is a non-diagonal irreducible invariant subvariety of $V \times W$ over $k$. Conversely, any non-diagonal irreducible invariant subvariety over $k$ induces an algebraic relation between the elements $a,b$ over $k$. Then the quantifier free type of $a$ over $k\langle b \rangle=k(b)$ is a forking extension of $p = \tp_{qf} (a/k)$. This connection is how our main results are stated and proved in section \ref{any3sec}. 
Similarly, nonminimal forking subvarieties of $X^n$ correspond to \emph{non-algebraic} forking extensions of the type $p$. So, the existence of invariant forking subvarieties has a natural interpretation in terms of the \emph{degree of nonminimality} of the type p, see \cite{baldwin2024simple, freitag2023degre}. Theorem \ref{nmdeg} says that the degree of nonminimality of a type over the fixed field is at most $2$. Theorem \ref{nmdegnon} says that  the degree of nonminimality of a general finite rank type is at most $\dim p +3$. 

\begin{rem}
In the particular formulation of the above results, the issue of the invariant subvarieties being irreducible is slightly delicate. We've stated the results in geometric language above, but our proofs use the model theory of ACFA. Let $\mc U \models ACFA$ and consider the solutions of the difference equation: $$(V, \phi)^\sharp = \{ a \in V(\mc U) \m | \, \sigma (a) = \phi (a) \}.$$
An irreducible variety $W \subset V$ is invariant if and only if $Y \cap (V, \phi)^\sharp$ is Zariski-dense in $Y$. This correspondence is at the heart of our results, but it fails when $Y$ is not irreducible. We emphasize that our results are, on a fundamental level, about the rational type which is a generic solution of the difference equation $(X, \phi)^\sharp$ of a $\sigma$-variety or dynamical system. The connection to the system itself can be somewhat delicate.  Many natural questions about the dynamical system itself won't manifest in terms of these generic solutions, and thus won't be amenable to attack by our methods without significant modification. 

Consider for instance the weakly invariant subvariety of $(X^2 , f^{\times 2})$ given by $f(x)=f(y)$. The subvariety is invariant, but the non-diagonal components (the components of $\frac{f(x)-f(y)}{x-y}=0$) are not. These components won't have generic points in $\mc U$ solving the system of difference equations. Examples in which the image of $f$ is dense in each component of $Y^\sigma$ can be obtained from periodic subvarieties, for which versions of our results will apply as we explain next.
\end{rem}

\subsection{From invariant to periodic} When $\sigma$ is not the identity on $k$, $f$ is not a self-map of $X$. Nevertheless, there is a sensible notion of periodicity of subvarieties. Let $(V,f)$ be a $\sigma$-variety. 
A subvariety $X$ of $V$ is called \emph{periodic} of order $d$ if $f^{\diamond d} (X) = X^{\sigma^d}$ where $f^{\diamond d}= f^{\sigma ^{d-1}} \circ f^{\sigma ^{d-2}} \ldots f^\sigma \circ f : V \rightarrow V^{\sigma^n}$. In the autonomous situation, when $\sigma$ is the identity, this matches the standard notion of periodic. 

Then $(V,f^{\diamond d})$ is a $\sigma ^d$-variety. Working in (the reduct) $\mc U$ in the language of rings together with a symbol $\tau$ for $\sigma^d$, the structure is a model of $ACFA$ in this language by \cite[Corollary of page 3013]{Chatzidakis99modeltheory}. Additionally, $X$ is an invariant subvariety of the $\tau$-variety $(V, f^{\diamond d})$. As such, each of our main results applies to $(V,f^{\diamond d})$ working in the structure $\mc U$ with $\tau $ and the language of rings. Thus, one can replace invariant by periodic of some fixed period in each of the statements of the main results above. We will refrain from returning to this point in the subsequent sections of the paper. 

\subsection{Outline of the paper} 
In section \ref{two}, we review some recent related results regarding invariant subvarieties of algebraic dynamical systems. In section \ref{three} we cover the facts from $ACFA$ which we need. In section \ref{four} we prove the results we need coming from pseudofinite permutation group theory. In section \ref{any3sec} we prove our main results. 

\subsection{Differential algebraic geometry}
In the setting of differential algebraic geometry, the analogous result \cite{freitag2022any} to our main theorem and its subsequent strengthening \cite{freitag2023degre} has seen applications via the via its contrapositive form. The strong minimality of several classes of equations has been established via showing that any three solutions are independent or by using the fact that that the degree of nonminimality is small. For instance, Nagloo and the author \cite{freitag2025algebraic} give a new proof of the irreducibility of certain Painlev\'e equations. Nagloo and Duan \cite{duan2025algebraic} apply the criterion to Lotka-Volterra systems. Devilbiss and the author \cite{devilbiss2023generic} apply the criterion to solve the nonconsant case of Poizat's conjecture on generic differential equations. Casale, Nagloo, Devilbiss and the author \cite{casale2022strong} apply the criterion to give a new proof of the strong minimality of the equations of certain automorphic functions (e.g. the $j$-function). We hope the results of this paper open up a similar possibility for difference equations and algebraic dynamics.

\subsection{Acknowledgements} Thanks to the organizers of the American Institute of Mathematics workshop Motives and Mapping Class Groups, where this work was initially begun. Thanks in particular to Daniel Litt and Aaron Landesman. Thanks also to Nick Ramsey, Carlos Arreche, Ronnie Nagloo, Holly Krieger, and especially Remi Jaoui for conversations during the preparation of this manuscript.

\section{Dynamics and invariant subvarieties} \label{two}
The results of several recent works characterize the invariant subvarieties of special classes of rational $\sigma$-varieties. Characterizing invariant subvarieties of products of curves has, in particular, played an important role in numerous recent results in dynamics. 

For instance, when $X=\m A^n$ and $\phi$ is given by single variable polynomials, $(x_1, \ldots , x_n) \mapsto (f_1(x_1), \ldots , f_n(x_n))$ of degree at least $2$, Medvedev and Scanlon \cite{AliceTom} show that invariant subvarieties can be built from invariant plane curves of $(\m A^2, (f_i,f_j))$ for some $i,j$. Medvedev and Scanlon then make a detailed study of such invariant plane curves. There are partial generalizations to higher dimensional dynamics \cite{xie2024algebraicity}. Bell, Moosa and Satriano \cite{bell2024invariant} study when an algebraic dynamical system $(X,\phi)$ admits a nonconstant \emph{invariant rational map}, that is a map to the dynamical system $(\m A^1 , id)$: 
\[
\begin{tikzcd}
X \arrow[r, "\phi"] \arrow[d, "f"'] & X^\sigma \arrow[d, "f^\sigma"] \\
\m A^1 \arrow[r, "id"'] & \m A^1
\end{tikzcd}
\]

Such an invariant rational function yields a family of invariant hypersurfaces of $X$ (by considering the fibers of the function). In fact, \cite{bell2010dixmier} shows that this is the only way in which families of invariant hypersurfaces arise. It can happen that $(X,\phi)$ has no invariant rational function over $k$, but there is such a function over a $\sigma$-field $K$ extending $k_1.$ This is a central problem of study in \cite{bell2024invariant} in the autonomous case and \cite{kamensky2024binding} in the general case. When there is such an invariant rational function over a $\sigma$-field extension, there is a commuting diagram of the form:
\[
\begin{tikzcd}
  (X \times Z , \phi \times \psi ) \arrow[rr, "g"' ] \arrow[dr, ] & & (\m A^1 \times Z , id \times \psi ) \arrow[dl, ]\\
  & ( Z , \psi ) &
\end{tikzcd}
\]

Whenever there is such a diagram for $(X,\phi)$, there is a diagram in which the role of $(Z, \psi)$ can be played by $(X^n , \phi^{\times n})$. The main result of \cite{bell2024invariant} shows that in the autonomous case assuming some power of $(X,\phi)$ has an invariant rational function, then already $(X^2 , \phi^{\times 2})$ does. Kamensky and Moosa \cite{kamensky2024binding} prove a weaker result in the more general non-autonomous setting, bounding the power of $(X, \phi)$ by $\dim X +3$. The fibers of an invariant rational function give rise to a family of invariant hypersurfaces - which are nonminimal forking subvarieties in the notation of our introduction above. 

In \cite{xie2024algebraicity}, Xie studies invariant subvarieties for products of endomorphisms of projective varieties $f_i : X_i \rightarrow X_i$ under the strong assumption that every invariant subvariety of the product is the product of invariant subvarieties of the factors (or unions of such). In model theoretic terms, the systems $(X_i , f_i)$ are \emph{weakly orthogonal} in his setting.\footnote{In \cite{xie2024algebraicity} and \cite{AliceTom}, the terminology \emph{almost orthogonal} is used, but we prefer the term \emph{weakly orthogonal} as it discourages confusion with the term \emph{almost internality}.} Ghioca and Xie \cite{ghioca2018algebraic, ghioca2020dynamical} study invariant subvarieties of \emph{skew-linear} maps $\phi: X \times \m A^n \rightarrow X \times \m A^n$. 

Invariant subvarieties of particular maps have also had striking applications to classical diophantine results. For instance Pink and Roessler \cite{pink2002hrushovski} characterize invariant subvarieties of isogenies of abelian varieties to characterize subvarieties with Zariski dense sets of torsion points. Krieger and Reschke \cite{krieger2017coho} generalize the result to endomorphisms of abelian varieties. Other classical diophantine results have dynamical analogues. See \cite{10.1215/00127094-1384773, ghioca2017dynamical, ghioca2018dynamical} for cases of the dynamical Andr\'e-Oort. 

One reason motivating the study of invariant subvarieties is the \emph{Zariski dense orbit conjecture}, whose original form goes back to Zhang \cite{zhang2006distributions}. Zhang conjectured that when $X$ is a projective variety and $\phi:X \rightarrow X$ is a polarized endomorphism, then there is a point $x \in X(k)$ such that the forward orbit of $x$ is Zariski dense in $X$. The closure of a forward orbit is invariant, so one might regard the conjecture as describing a certain kind of sparsity of the invariant subvarieties. Medvedev and Scanlon \cite{AliceTom} strengthen the conjecture, dropping the assumption that $X$ is polarized and $\phi $ is projective, but assuming there does not exist a positive dimensional algebraic variety $Y$ and dominant rational map $g : X \rightarrow Y$ for which $g \circ \phi = g$ generically. Such a map gives rise to an invariant rational function, $f$ as above and a family of invariant hypersurfaces. In \cite[Conjecture 1.4]{xie2025existence}, Xie slightly strengthens the conjecture of Medvedev and Scanlon, adding that a point whose orbit is dense can be found in any open subset of $X$. This formulation is hinted at by Medvedev and Scanlon - see \cite[Remark 7.17]{AliceTom} and considered in various works \cite{ghioca2018algebraic}. A polarized dynamical system has no invariant rational function - see for instance the introduction of \cite{xie2025existence}. This condition is necessary for the Zariski-dense orbit conjecture. 

It is often far from obvious that a dynmical system has an invariant rational function. For instance,  $(\m A^4, (x_2, -x_4,x_1-x_1x_2^2, -x_3+x_1x_2x_4))$ has invariant rational function $f=x_1x_4-x_2x_3$. For additional examples of this kind, see \cite[page 5]{bisi2024some}. The collection of invariant rational functions, like the property of possessing a Zariski-dense orbit, is not stable under taking products - for instance, take $\phi: \m A^1 \rightarrow \m A^1$ given by $x \mapsto x+1.$ 

As we have mentioned, the appearance of an invariant rational function on $X^n$ also signals the appearance of a invariant forking subvariety, but the converse is not necessarily true. Kamensky and Moosa \cite{kamensky2024binding} show that if $X^n$ has an invariant rational function for some $n$, then there is already such a function on $X^{\dim X +3}$. They conjecture that this bound can not in general be improved. The analogous conjecture is true in the setting of differential algebraic geometry \cite{freitag2025finite}. Our main result shows that the situation is different for non-diagonal invariant subvarieties. If they ever appear, they  appear already in $X^4$. 

\section{Reviewing rational $\sigma$-varieties and quantifier free types in $ACFA$} \label{three}
Fix $(k,\sigma)$ where $k$ is an algebraically closed  characteristic zero field and $\sigma$ is an automorphism. Let $\mc U \models ACFA$ be a saturated model containing $k$. Let $\mc C$ denote the fixed field $\{u \in \mc U \, | \, \sigma (u)=u \}$ of $\mc U$. We consider difference equations (more generally definable sets) as synonymous with their points in the model $\mc U$. 

We will generally work in the setting of \cite{kamensky2024binding}, which we will briefly review in this section. The basics of the model theory of $ACFA$ have been covered by numerous references in the literature \cite{chatzidakis2005model, chatzidakis2015model, Chatzidakis99modeltheory, kamensky2024binding, AliceTom}, and so we restrict ourselves to the necessary technical results and the less classical aspects which our proofs require. Many of the notions of \cite{kamensky2024binding} are the analog of classical notions from model theory, but some care is required as we are interested in the \emph{quantifier-free} analogs of various notions, but our theory, $ACFA$, does not admit quantifier elimination. 

\begin{defn}
    A rational $\sigma$-variety over $k$ is a pair $(V,\phi)$ with $\phi: V \rightarrow V^\sigma$ a dominant rational map with $V/k$ an absolutely irreducible variety. 
\end{defn}

Associated with any rational $\sigma$-variety are the solutions of a difference equation: 
$$(V, \phi )^\sharp = \{a \in V(\mc U ) \, | \, \sigma (a) = \phi (a) \} $$

Associated to any rational $\sigma$-variety is the quantifier-free type of a (Zariski) generic over $k$ point $a \in V$ such that $\sigma (a) = \phi (a)$. Such a quantifier-free type will be referred to as a \emph{rational type}. The collection of quantifier-free types over $A$ is denoted $S_{qf}(A)$. We say $p \in S_{qf} (A)$ is stationary if for all $B \supset A$ there is a unique nonforking extension $q \in S_{qf} (B)$ - that is, an extension such that for all $a \models q$, $a \ind _A B.$ 

\begin{lem} \cite[Lemma 2.2]{kamensky2024binding}
Quantifier-free types $p \in S_{qf} (A)$ are stationary whenever $A$ is an algebraically closed difference field. 
\end{lem}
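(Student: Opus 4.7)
The plan is to reduce stationarity in $ACFA$ to the classical fact that difference field extensions of an algebraically closed common subfield admit a canonical amalgamation. First I would translate the statement into difference-algebra terms: a quantifier-free type $p \in S_{qf}(A)$ is determined by, and determines, the isomorphism type over $A$ of the difference subfield $A\langle a \rangle \subset \mc U$ for any realization $a$. Under this correspondence, showing uniqueness of the nonforking extension of $p$ to some $B \supset A$ becomes showing that the difference field $B\langle a \rangle$ is determined up to $B$-isomorphism by $B$ together with the $A$-isomorphism class of $A\langle a\rangle$, provided $a$ is chosen independent from $B$ over $A$.

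Next I would unpack what nonforking provides. By the transcendence-degree characterization of forking in $ACFA$ recalled in the introduction, $a \ind_A B$ means $\trdeg_B B\langle a \rangle = \trdeg_A A\langle a \rangle$, i.e.\ $B$ and $A\langle a\rangle$ are algebraically disjoint over $A$ inside $\mc U$, or equivalently the natural map $B \otimes_A A\langle a \rangle \to \mc U$ is injective. Since $A$ is algebraically closed as a field, $B \otimes_A A\langle a\rangle$ is already a domain, and its fraction field is canonically identified with the compositum $B\langle a \rangle$; the action of $\sigma$ on this compositum is forced to coincide with the one induced by $\sigma \otimes \sigma$ on the tensor product, since $B$ and $A\langle a \rangle$ together generate $B\langle a\rangle$ as a field and $\sigma$ is already fixed on each factor.

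With these ingredients in place the uniqueness argument is essentially formal. Given two nonforking extensions of $p$ realized by $a_1, a_2 \in \mc U$, a difference-field isomorphism $\iota : A\langle a_1 \rangle \to A\langle a_2 \rangle$ over $A$ exists because $\tp_{qf}(a_1/A) = p = \tp_{qf}(a_2/A)$. Base-changing $\iota$ produces a $B$-algebra isomorphism $B \otimes_A A\langle a_1 \rangle \to B \otimes_A A\langle a_2 \rangle$ commuting with the tensor-product $\sigma$-action; passing to fraction fields and invoking the previous paragraph identifies this with a difference-field isomorphism $B\langle a_1 \rangle \to B\langle a_2 \rangle$ over $B$ sending $a_1$ to $a_2$, so $\tp_{qf}(a_1/B) = \tp_{qf}(a_2/B)$. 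The only substantive input is the algebraic closedness of $A$, used to guarantee that $B \otimes_A A\langle a\rangle$ is a domain; this is the step I would expect to be the main point to verify carefully, since without it $\spec(B \otimes_A A\langle a\rangle)$ could decompose into several components, and different choices of component would produce genuinely distinct nonforking extensions of $p$, detected by Galois-theoretic ambiguity between $A$ and its field-theoretic algebraic closure.
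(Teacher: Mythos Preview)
The paper does not give its own proof of this lemma; it simply cites \cite[Lemma 2.2]{kamensky2024binding}. Your argument is correct and is essentially the standard one (going back to \cite{Chatzidakis99modeltheory}): over an algebraically closed base $A$, any two field extensions are linearly disjoint as soon as they are algebraically disjoint, so the free amalgam $B \otimes_A A\langle a\rangle$ is a domain whose fraction field, with the tensor $\sigma$-action, recovers $B\langle a\rangle$ uniquely up to $B$-isomorphism. One small point worth making explicit is that you may, without loss, replace $B$ by the inversive difference field it generates, so that $B\langle a\rangle$ really is the field compositum $B \cdot A\langle a\rangle$; and that existence of a nonforking extension (as opposed to uniqueness) is automatic from simplicity of $ACFA$, so need not be argued separately.
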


The class of rational types is closed under rational maps - that is, when $a \models p$, a rational type over $k$, and $f(x)$ is a rational map, $f(a)$ satisfies a rational type over $k.$ A rational map $f:p \rightarrow q$ of rational types is a $k$-rational map so that for every (equivalently some) $a \models p$, $f(a) \models q$. Two rational types are birationally equivalent if there are rational maps $p \rightarrow q$ and $q \rightarrow p$. The \emph{dimension} of a rational type is the transcendence degree of the field generated by a realization of the type.

Recall that a type $p=\tp (a/k)$ is \emph{weakly orthogonal} to $\mc C$ if $a \ind _k c$ for any tuple of elements $c \in \mc C$ - we write $p \perp ^w  \mc C$. A type $p=\tp (a/k)$ is \emph{orthogonal} to $\mc C$ if any nonforking extension of $p$ is weakly orthogonal to $\mc C$ - we write we write $p \perp  \mc C$. 

\begin{lem} \label{weakortho} \cite[Proposition 2.6]{kamensky2024binding}
Let $a$ satisfy a rational $\sigma$-type over $k$. Then $\tp(a/k)$ is weakly orthogonal to $\mc C$ if $k(a) \cap \mc C \subset k^{alg}$.  
\end{lem}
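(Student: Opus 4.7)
The plan is to prove the contrapositive: assuming $\tp(a/k)$ is not weakly orthogonal to $\mc C$, I will exhibit an element of $k(a)\cap\mc C$ that does not lie in $k^{alg}$. First I record two standard observations for a rational type. Since $\sigma(a)=\phi(a)$ is a $k$-rational function of $a$, every forward iterate $\sigma^n(a)$ lies in $k(a)$ and the backward iterates lie in $k(a)^{alg}$, so $k\langle a\rangle$ is algebraic over $k(a)$ and in particular $\acl(k,a)=k(a)^{alg}$. For any tuple $c$ of constants, also $k\langle c\rangle=k(c)$.

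Next I would unpack non-weak-orthogonality to fix a finite tuple $c\in\mc C^{<\omega}$ with $a\nind_k c$, and invoke the fact (see \cite{chatzidakis2005model,Chatzidakis99modeltheory}) that $\mc C$ is stably embedded in $ACFA$ with induced structure that of a pure algebraically closed field. Combined with the transcendence-degree description of forking in $ACFA$, stable embedding implies that the forking $c\nind_k a$ is already witnessed inside $\mc C$: the tuple $c$ is algebraically dependent, in the pure field sense, on $\acl(k,a)\cap\mc C$ over $\acl(k)\cap\mc C$. Since a tuple cannot be dependent on its own base, this forces
\[
\acl(k,a)\cap\mc C \;\not\subseteq\; \acl(k)\cap\mc C .
\]
Pick $c_0\in(\acl(k,a)\cap\mc C)\setminus(\acl(k)\cap\mc C)$; because $c_0\in\mc C$, this gives $c_0\notin\acl(k)=k^{alg}$, and by the rationality of $p$, $c_0\in\acl(k,a)=k(a)^{alg}$.

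The final step is a $\sigma$-descent from $k(a)^{alg}$ to $k(a)$. Let $m(y)\in k(a)[y]$ be the monic minimal polynomial of $c_0$ over $k(a)$. Applying $\sigma$ coefficient-wise yields $m^{\sigma}\in k(\phi(a))[y]\subseteq k(a)[y]$, again monic of the same degree and vanishing at $\sigma(c_0)=c_0$. Uniqueness of the minimal polynomial forces $m^{\sigma}=m$, so every coefficient of $m$ is $\sigma$-fixed and therefore lies in $k(a)\cap\mc C$. If these coefficients all lay in $k$ then $c_0$ would be algebraic over $k$ and hence in $k^{alg}$, contradicting our choice. Thus at least one coefficient of $m$ belongs to $(k(a)\cap\mc C)\setminus k^{alg}$, completing the contrapositive.

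The main obstacle is the middle paragraph: promoting the abstract forking $a\nind_k c$ to an \emph{individual} constant algebraic over $k(a)$. A purely field-theoretic argument from transcendence-degree drop only produces an element of $k(a)^{alg}\cap k(c)^{alg}$, which need not be in $\mc C$ once $k\not\subseteq\mc C$. Stable embedding of the fixed field in $ACFA$ is the crucial ingredient that guarantees the witness already lives in $\mc C$; the rest is a soft manipulation with minimal polynomials.
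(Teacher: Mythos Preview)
The paper does not give its own proof of this lemma; it is simply quoted from \cite[Proposition~2.6]{kamensky2024binding}. So there is no in-paper argument to compare against, and I can only evaluate your proof on its own terms.

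Your argument is sound in outline, with two points worth flagging.

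First, a factual slip: you describe the induced structure on $\mc C$ as that of a ``pure algebraically closed field,'' but in $ACFA_0$ the fixed field is \emph{pseudofinite} (this is Lemma~\ref{stabembedpsf} in the paper). Fortunately this does not break anything, since what your middle paragraph actually uses is that $\mc C$ is stably embedded and that forking in the induced structure coincides with field-theoretic algebraic independence---both of which hold for pseudofinite fields.

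Second, the passage from $c \nind_k a$ to a \emph{real} element $c_0 \in (\acl(ka)\cap\mc C)\setminus\acl(k)$ needs one more ingredient than you state. Stable embeddedness puts the canonical base of $\tp(c/\acl(ka))$ inside $\mc C^{eq}\cap\acl^{eq}(ka)$ and outside $\acl^{eq}(k)$; extracting an honest element of $\mc C$ then requires (geometric) elimination of imaginaries for the fixed field. You correctly identify this step as the main obstacle, and once it is made precise the rest is fine: your $\sigma$-descent via the minimal polynomial is clean, the key point being $\sigma(k(a))\subseteq k(a)$, which holds exactly because the type is rational.
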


By the previous fact, weak orthogonality depends only on the quantifier-free type of $a$ over $k$. Thus, it makes sense to discuss (weak) orthogonality of rational types (or quantifier-free types more generally). 

When $p$ is nonorthogonal to $\mc C$, an initial segment of a Morley sequence of $p$ is not weakly orthogonal to $\mc C$ - that is $p \not\perp \mc C$ implies $p ^{(l)} \not \perp ^w \mc C$ for some $l$ - a main application of \cite{kamensky2024binding} is to bound the value of $l$, which follows by connecting $l$ to the degree of generic transitivity of a certain group action:
\begin{lem} \label{fact1} \cite[Lemma 5.3]{kamensky2024binding}
    Suppose that $SU(p) > 1 $ and $p$ is qf-$\mc C$-internal and weakly qf-$\mc C$-orthogonal. Then let $G=Aut_{qf}(p/\mc C)$. If $q^{(d)}$ is weakly qf-$\mc C$-orthogonal, then $G$ acts transitively and generically $d$-transitively on $p(\mc U).$
\end{lem}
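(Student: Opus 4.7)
The plan is to apply the standard binding-group/liaison-group argument adapted to the quantifier-free setting developed in \cite{kamensky2024binding}, with stationarity of quantifier-free types (Lemma 2.2 above) playing the central role. The hypothesis of qf-$\mc C$-internality of $p$ guarantees that $G = Aut_{qf}(p/\mc C)$ genuinely acts on $p(\mc U)$ and that every qf-automorphism of $\mc U$ over $\mc C$ restricts to a permutation of $p(\mc U)$ lying in $G$. The hypothesis of weak qf-$\mc C$-orthogonality is then exploited through stationarity to identify qf-types over $\mc C$.

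First I would verify plain transitivity (the $d=1$ case). Given any $a, b \models p$, weak qf-$\mc C$-orthogonality of $p$ yields $a \ind_k \mc C$ and $b \ind_k \mc C$. By Lemma 2.2, $p$ is stationary over $k$, so both $a$ and $b$ realize the unique nonforking qf-extension of $p$ to $\mc C$; in particular $\tp_{qf}(a/\mc C) = \tp_{qf}(b/\mc C)$. Saturation of $\mc U$ produces a qf-automorphism of $\mc U$ fixing $\mc C$ pointwise and sending $a$ to $b$, and via qf-$\mc C$-internality this automorphism descends to an element $g \in G$ with $g(a) = b$.

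Next I would run the same recipe one level up to obtain generic $d$-transitivity. Take two independent $d$-tuples $(a_1, \ldots, a_d)$ and $(b_1, \ldots, b_d)$, each a Morley sequence of $p$ over $k$; both realize the power $p^{(d)}$. The assumed weak qf-$\mc C$-orthogonality of $p^{(d)}$ gives $(a_1, \ldots, a_d) \ind_k \mc C$ and $(b_1, \ldots, b_d) \ind_k \mc C$, and stationarity of $p^{(d)}$ over $k$ (again Lemma 2.2, applied to the power) then forces $\tp_{qf}((a_1, \ldots, a_d)/\mc C) = \tp_{qf}((b_1, \ldots, b_d)/\mc C)$. A single qf-automorphism of $\mc U$ over $\mc C$ therefore sends the first tuple to the second, and internality descends this to an element $g \in G$ with $g(a_i) = b_i$ for every $i$.

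I expect the main obstacle is not a single computational step but rather the foundational bookkeeping: because $ACFA$ does not eliminate quantifiers, one must be careful that the group $G$ we extract from qf-automorphisms of $\mc U$ is exactly the binding group in the qf-Galois formalism of Kamensky--Moosa, and that Morley powers, nonforking extensions, and stationarity all behave as expected at the level of qf-types (so that "same qf-type over $\mc C$" is genuinely witnessed by an automorphism). Once these framework matters are granted as in \cite{kamensky2024binding}, the proof reduces to the two applications of stationarity sketched above, with the role of $SU(p) > 1$ being only to ensure the statement is not vacuous and that $G$ is the correct object to study.
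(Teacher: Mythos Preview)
The paper does not supply its own proof of this lemma: it is imported verbatim as \cite[Lemma~5.3]{kamensky2024binding} and used as a black box. Your proposal is the standard binding-group argument and is essentially what one finds in the cited reference, so there is nothing to compare against in the present paper. Your two-step outline (weak orthogonality $\Rightarrow$ independence from $\mc C$; stationarity over algebraically closed $k$ $\Rightarrow$ equal qf-types over $\mc C$; saturation $\Rightarrow$ a qf-automorphism over $\mc C$ realizing the required permutation) is correct, and you are right that the delicate content lies entirely in the qf-Galois framework of Kamensky--Moosa rather than in this particular deduction. One minor remark: the hypothesis $SU(p)>1$ is indeed not used in the transitivity argument itself; in \cite{kamensky2024binding} it is part of the ambient setup ensuring the binding group is infinite and the conclusion has content.
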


\begin{defn}
Let $p \in S_{qf} (k)$ be stationary. $p$ is \emph{qf}-internal to $\mc C$ if for all $a \models p$ there is a difference field $K$ extending $k$ with $a \ind _k K$ and $a \in K \langle c \rangle$ for some $c \in \mc C$. 
\end{defn}

\begin{lem} \label{rationalreduction} \cite[Proposition 2.11]{kamensky2024binding} Let $p \in S_{qf} (k) $, with $k$ an algebraically closed difference field. Then if $p$ is non-orthogonal to $\mc C$, there is a rational map $p \rightarrow q$ with $q$ the generic rational type of a positive dimensional $\sigma$-variety over $k$ with $q$ qf-$\mc C$-internal. 
\end{lem}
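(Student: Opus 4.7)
The plan is to unwind the definition of non-orthogonality to obtain a forking witness inside $\mc C$ over an extension base, and then to extract from that witness a $k$-rational, $\mc C$-internal quotient of $p$. Concretely, by the definition of $p \not\perp \mc C$ there exist an algebraically closed difference field $K \supseteq k$ and a realization $a$ of the unique (by stationarity of qf-types, Lemma 2.2) non-forking extension of $p$ to $K$, together with a tuple $c$ from $\mc C$, with $a \nind_K c$. The transcendence-degree characterization of forking in $ACFA$ then gives
\[
\td_{K\langle c\rangle} K\langle a,c\rangle \;<\; \td_K K\langle a\rangle.
\]

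This drop yields a proper $\sigma$-subvariety of $V_K$ defined over $K\langle c \rangle$ which still contains $a$. Projecting to the locus of $a$ modulo the $K\langle c \rangle$-rational $\sigma$-relations it satisfies, I obtain a $K$-rational map of $\sigma$-varieties $\psi : V \to V'$ with $\dim V' > 0$ and $\psi(a) \in K\langle c \rangle^{\mathrm{alg}}$; after absorbing finitely many algebraic elements into $K$ and $c$, I may arrange $\psi(a) \in K\langle c \rangle$. Thus $\tp_{qf}(\psi(a)/K)$ is qf-$\mc C$-internal directly from the definition.

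Now set $q := \tp_{qf}(\psi(a)/k)$. Since $a \ind_k K$ and $\psi(a)$ is a rational function of $a$ with parameters from $K$, transitivity of independence gives $\psi(a) \ind_k K$, so $q$ inherits qf-$\mc C$-internality from the $K$-witness $c$. Positive-dimensionality of $q$ is preserved because $\td_k k\langle \psi(a)\rangle = \td_K K\langle \psi(a)\rangle > 0$ by that same independence. The sought rational map of rational types $p \to q$ is the map $a \mapsto \psi(a)$.

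The genuinely delicate point is the \emph{descent} from $K$ to $k$: the map $\psi$ uses parameters from $K \cup c$, neither of which lies in $k$, so $\psi$ itself is not a priori a $k$-rational map, and one must check that the image variety and the map to it can be taken $k$-rational. The idea is to replace $V'$ by the $k$-locus of $\psi(a)$ and to argue that, because $a \ind_k K$, the field of definition of $\mathrm{loc}(\psi(a)/K)$ is already contained in $k$ (using stationarity of qf-types over an algebraically closed difference field to transport the construction along $k$-automorphisms). This is where the care demanded by working with quantifier-free types in $ACFA$ rather than full types shows up, and it is the step I expect to require the most bookkeeping; everything else is a routine unpacking of the non-orthogonality and internality definitions.
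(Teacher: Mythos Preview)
The paper does not prove this lemma at all; it is quoted as \cite[Proposition 2.11]{kamensky2024binding} and used as a black box. So there is no ``paper's own proof'' to compare against, and what you have written is an attempted reconstruction of the Kamensky--Moosa argument.

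That said, your attempt has a genuine gap, and it is precisely the one you flag in your final paragraph. You construct a rational map $\psi$ using parameters from $K$ (and possibly $c$), and then declare that ``the sought rational map of rational types $p\to q$ is the map $a\mapsto\psi(a)$.'' But a rational map of rational types over $k$ must be a $k$-rational function, and $\psi$ is not one. Your proposed fix, replacing $V'$ by the $k$-locus of $\psi(a)$, only descends the \emph{target}; it does nothing to descend the \emph{map}. Moreover, the claim ``$\psi(a)\ind_k K$ because $a\ind_k K$ and $\psi(a)$ is a rational function of $a$'' is false as stated: $\psi(a)\in K(a)$, and elements of $K(a)$ need not be independent from $K$ over $k$ (any element of $K\setminus k$ is a counterexample). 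So neither the positive-dimensionality of $q$ nor the $k$-rationality of the map is established. This is not bookkeeping; it is the entire content of the proposition.

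The standard route around this, and the one Kamensky--Moosa take, avoids descent altogether: one shows that the elements $b\in k(a)$ with $\tp_{qf}(b/k)$ qf-$\mc C$-internal form a difference subfield of $k(a)$, takes $b$ to generate it, and sets $q=\tp_{qf}(b/k)$. The map $a\mapsto b$ is then $k$-rational by construction. Non-orthogonality is only used at the end, to show that this maximal qf-$\mc C$-internal quotient is positive-dimensional; one argues that if it were trivial, then $\tp_{qf}(a/k)$ would in fact be orthogonal to $\mc C$. Your approach tries to build the quotient over $K$ and pull it down, which is the wrong order of operations here.
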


The main thrust of \cite{kamensky2024binding} is to develop the difference Galois theory of qf-$\mc C$-internal types, $p \in S_{qf}(k)$. What they show is that the collection of of automorphisms of $p(\mc U)$ which fix $k$ and $\mc C$ pointwise has the structure of a definble group acting definably on $p$. Because $p$ is $\mc C$-internal, this action is definably isomorphic (over some difference field extension) to the action of a definable group on a definable set in $\mc C$. The general difficulty is that we don't have tight control of this isomorphism, so an analysis of the group theoretic properties of the action becomes necessary. 

We now recall a definition which can be made in various degrees of generality (e.g. supersimple finite rank), but we will restrict to the setting of difference fields, mirroring for instance, section 1.9 of \cite{chatzidakis2008difference}
\begin{defn}
    Let $a$ be a finite rank type over $k$ an algebraically closed difference field. A sequence of tuples $(a_1, \ldots, a_n)$ is called a \emph{semi-minimal} analysis of $a$ over $k$ if $\acl(ka)= \acl(ka_1 a_2\ldots a_n)$ and for each $i$, $\tp(a_i /Ka_1 \ldots a_i)$ is qf-internal to the set of conjugates of a type of $SU$-rank 1.
\end{defn}

\begin{lem} \cite[section 1.9]{chatzidakis2008difference} \label{semimin} Every type $p$ of finite SU-rank has a semi-minimal analysis. The type $p$ is $1$-based if the collection of all $SU$-rank $1$ types associated its semi-minimal analysis are $1$-based.
\end{lem}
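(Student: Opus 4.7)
The plan is to establish both parts by induction on $SU$-rank, leveraging the standard structure theory of supersimple finite rank types as developed in the references on $ACFA$.

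For the existence of a semi-minimal analysis, I would induct on $n = SU(a/k)$. The cases $n \leq 1$ are immediate: take $a_1 = a$, since any rank $\leq 1$ type is qf-internal to itself (or to the locus of its conjugates). For the inductive step with $n > 1$, the objective is to produce a tuple $a_1 \in \acl(ka)$ with $0 < SU(a_1/k) < n$ whose type over $k$ is qf-internal to the family of conjugates of a single $SU$-rank $1$ type; granted this, the Lascar inequality available in supersimple theories gives $SU(a/ka_1) = n - SU(a_1/k) < n$, and the inductive hypothesis applied over $ka_1$ supplies the remaining $a_2,\ldots,a_m$. To extract such an $a_1$, I would choose a non-algebraic $b \in \acl(ka)$ minimizing $SU(b/k)$; this element is necessarily of $SU$-rank $1$, since any non-algebraic forking of $\tp(b/k)$ inside $\acl(ka)$ would contradict minimality. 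This yields a rank $1$ building block, from which one constructs the maximal qf-internal envelope inside $\acl(ka)$ to serve as $a_1$; the envelope exists because qf-internality is preserved under finite products and under forking extensions in the supersimple finite rank setting.

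For the characterization of $1$-basedness, I would use two closure properties of the class of $1$-based types. First, $1$-basedness descends under qf-internality: if $\tp(c/K)$ is qf-internal to a family of $1$-based types, then $\tp(c/K)$ is itself $1$-based. Second, $1$-basedness is inherited by types in the semi-minimal analysis of a $1$-based type, since each $a_i$ lies in the algebraic closure of $ka$ and $1$-basedness passes to algebraic quotients. Iterating these closure properties along the sequence $a_1,\ldots,a_n$ shows that $p$ is $1$-based precisely when each rank $1$ type appearing in the analysis is, completing the proof of the equivalence.

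The main obstacle is the inductive construction of $a_1$: one must ensure the extracted tuple is qf-internal to a rank $1$ type rather than merely analyzable in rank $1$ types. This requires iterating the reduction carefully so that several rank $1$ pieces can be bundled into a single qf-internal envelope, exploiting the fact that in finite $SU$-rank the process terminates after finitely many steps. Apart from this bookkeeping, everything reduces to standard machinery from \cite{chatzidakis2008difference}, which is why the lemma is essentially cited rather than reproved in full.
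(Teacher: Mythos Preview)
The paper does not prove this lemma at all: it is stated with a citation to \cite[section 1.9]{chatzidakis2008difference} and no argument is given. So there is nothing in the paper to compare your proposal against; as you yourself note at the end, the result is imported wholesale from the literature.

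Your sketch is a reasonable outline of how the cited argument goes. Two small remarks. First, the lemma as stated is only one implication (if the rank $1$ types in the analysis are $1$-based then $p$ is $1$-based), not the equivalence you set out to prove; the converse is true but not what is being asserted here. Second, your extraction of $a_1$ is a bit loose: choosing a non-algebraic $b \in \acl(ka)$ of minimal $SU$-rank gives a rank $1$ element, but the passage from this to a tuple $a_1$ whose type is qf-internal to the conjugates of a single rank $1$ type needs the coordinatization machinery (e.g.\ via $p$-weight or the analysis in \cite{chatzidakis2008difference}) rather than just ``bundling'' pieces informally. None of this is a genuine gap so much as places where you are leaning on exactly the results you are citing.
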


Chatzidakis and Hurshovski  proved the following dichotomy theorem for types in $ACFA$ of $SU$-rank $1$. 

\begin{lem} \label{dichot} \cite[5.10]{Chatzidakis99modeltheory} Types $p$ in $S(k)$ which are orthogonal to $\mc C$ are $1$-based. 
\end{lem}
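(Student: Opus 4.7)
The plan is to prove the contrapositive: if $p$ is not one-based, then $p$ is non-orthogonal to $\mc C$. Using Lemma~\ref{semimin}, I would first reduce to the case where $p$ itself has SU-rank $1$, since non-one-basedness of $p$ forces non-one-basedness of at least one of the SU-rank $1$ types appearing in its semi-minimal analysis, and non-orthogonality of such a component to $\mc C$ propagates back to $p$ via standard forking calculus in the simple theory ACFA.

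So assume $p$ has SU-rank $1$ and is not one-based. The next step is to extract geometry from this failure of one-basedness. By the group configuration theorem in simple theories (Hrushovski's adaptation of the Zilber-Hrushovski group configuration), a non-one-based minimal type produces a type-definable infinite group $G$ acting faithfully and transitively on a set interbounded with the realizations of $p$, with generic of SU-rank $1$. The problem reduces to showing that such a $G$ must be non-orthogonal to the fixed field. Parallel to this, I would verify that the pregeometry on $p(\mc U)$, taking as closed sets those cut out by quantifier-free $\sigma$-algebraic equations, has the structure of a Zariski-type geometry: dimension theorem for generic intersections, unique generic for indecomposable closed sets, and control of smooth points. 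Applying Hrushovski's Zariski geometries theorem then interprets an infinite field $K$ in the structure generated by $p$, compatibly with the group action of $G$.

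The final step is to identify $K$ with the fixed field $\mc C$, up to isogeny or definable isomorphism: any infinite field definable in ACFA must, by analysis of its definable unary functions and the action of $\sigma$ on a transcendence basis, be isomorphic to $\mc C$. This forces $p$ to be non-orthogonal to $\mc C$, contradicting the hypothesis. The main obstacle, and the technical heart of Chatzidakis-Hrushovski's argument, is verifying that the relevant pregeometry in ACFA really does satisfy the Zariski axioms: unlike pure ACF, the ``closed sets'' involve iterates of $\sigma$, and one must work to obtain adequate intersection theory and smooth-point structure. A secondary, but still nontrivial, obstacle is the classification of infinite interpretable fields in ACFA, which requires controlling how $\sigma$ acts on any transcendence basis for the candidate field $K$.
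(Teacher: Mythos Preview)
The paper does not prove this lemma at all: it is stated as a citation of \cite[5.10]{Chatzidakis99modeltheory} and used as a black box. There is therefore no ``paper's own proof'' to compare against; what you have sketched is an outline of the original Chatzidakis--Hrushovski argument itself.

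As such an outline, your proposal is broadly on target. The reduction to $SU$-rank~$1$ via the semi-minimal analysis is correct, and the heart of the matter is indeed verifying that the pregeometry on a minimal non-one-based type satisfies enough of the Zariski-geometry axioms to run the Hrushovski--Zilber machinery and interpret an infinite field, which one then identifies with a fixed field. Two remarks. First, the group-configuration step you place between the reduction and the Zariski-geometry argument is somewhat redundant: in the Chatzidakis--Hrushovski proof the field is produced directly from non-local-modularity via the Zariski-geometries theorem, not by first extracting a rank-$1$ group and then upgrading. Second, the interpreted field need not be $\mc C$ on the nose but rather $\mathrm{Fix}(\sigma^n)$ for some $n$; this is harmless since $\mathrm{Fix}(\sigma^n)$ is non-orthogonal to $\mc C$, but your final paragraph elides this point. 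Neither issue is a genuine gap, but if you intend this as more than a sketch you would need to flesh out precisely which Zariski axioms are being verified and how --- that is where essentially all the work lies, as you correctly flag.
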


Chatzidakis and Hrusvhovski characterize nontrivial minimal 1-based types - we have slightly restated the result to use our language and specialized to characteristic zero. 

\begin{lem} \cite[5.12]{Chatzidakis99modeltheory} \label{minnontriv} Let $p \in S_{qf} (k)$ be a non-trivial minimal $1$-based type over an algebraically closed $\sigma$-field $k$. Then $p$ is nonorthogonal to the generic type of a minimal subgroup $H$ of $\m G_m$ or a simple abelian variety defined over $k$. 
\end{lem}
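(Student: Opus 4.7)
The plan is to extract a type-definable group from the type via the group configuration theorem, and then to classify what a minimal $1$-based group in $ACFA$ can look like using the known structure of $1$-based groups in supersimple theories together with the algebraic geometry of commutative algebraic groups in characteristic zero.

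First, I would apply the group configuration theorem in the simple-theory setting (Hrushovski's theorem as extended to simple theories by Ben-Yaacov--Tomasi\'c--Wagner). Non-triviality of the minimal type $p$ means exactly that one can exhibit a pairwise-independent triple $(a,b,c)$ of realizations of $p$ (possibly after moving to a nonforking extension) whose joint type does exhibit a nontrivial dependence, giving a group configuration. Since $p$ has $SU$-rank $1$, the group $G$ extracted from this configuration is type-definable, acts regularly (up to finite kernel) on a set interdefinable with $p$, and is itself of $SU$-rank $1$.

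Second, I would transfer $1$-basedness from $p$ to $G$: because the action of $G$ on $p$ is definable and essentially faithful, and the generic of $G$ is interalgebraic with a pair of realizations of $p$, $1$-basedness of $p$ forces the generic of $G$ to be $1$-based as well. At this point the problem reduces to classifying minimal $1$-based type-definable groups in $ACFA$ over $k$.

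Third, I would invoke the Hrushovski--Pillay style theorem on $1$-based groups in $ACFA$: any such $G$ is (up to isogeny) a $\sigma$-closed subgroup of a commutative algebraic group $A$ defined over $k$. By the structure theorem for commutative algebraic groups in characteristic zero, $A$ is an extension of an abelian variety by a linear group which itself is a product of copies of $\mathbb{G}_a$ and $\mathbb{G}_m$. The $\mathbb{G}_a$ factors are eliminated by $1$-basedness, because the generic type of $\mathbb{G}_a$ in $ACFA$ is not orthogonal to $\mathcal{C}$ and is not $1$-based (it carries too much structure coming from the additive group over the fixed field). Minimality of $G$ then forces $G$ to live, up to isogeny, inside either $\mathbb{G}_m$ or a simple abelian quotient of $A$; cutting down to a minimal subgroup gives the stated $H$, and nonorthogonality of $p$ to the generic of $H$ follows from the interalgebraicity provided by the group configuration.

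The main obstacle I expect is the third step: one needs the classification of minimal $1$-based groups in $ACFA$ (together with the characteristic-zero ruling-out of $\mathbb{G}_a$), which is not a purely formal consequence of $1$-basedness but uses specific features of commutative algebraic groups and of $\sigma$-closed subgroups. The first two steps are relatively standard once one has the simple-theory group configuration theorem available, but the third step is where the arithmetic-geometric content of the Chatzidakis--Hrushovski dichotomy genuinely enters.
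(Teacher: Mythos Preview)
The paper does not give its own proof of this lemma; it is quoted as \cite[5.12]{Chatzidakis99modeltheory} and only supplemented by the remark that, inspecting the original argument, the group $H$ and the witness to nonorthogonality can be taken quantifier-free. So there is no in-paper proof to compare against; your task is really to reconstruct the Chatzidakis--Hrushovski argument.

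Your outline is essentially that reconstruction and is correct in substance: extract a rank-$1$ group from a non-trivial modular minimal type, transfer $1$-basedness to the group, embed it (up to isogeny) in a commutative algebraic group, and use the decomposition of commutative algebraic groups in characteristic zero to land in $\mathbb{G}_m$ or a simple abelian variety after discarding the vector-group part. Two small points are worth tightening. First, Chatzidakis--Hrushovski do not need the later simple-theory group configuration of Ben-Yaacov--Tomasi\'c--Wagner: they show that $1$-based types in $ACFA$ are stable and stably embedded, so the classical stable group configuration already applies; invoking the simple version is harmless but anachronistic relative to the source you are reproving. Second, your elimination of $\mathbb{G}_a$ is phrased in terms of ``the generic type of $\mathbb{G}_a$'', which has infinite rank; what you actually need is that every \emph{minimal} $\sigma$-closed subgroup of a vector group is nonorthogonal to $\mathcal{C}$ (because it is defined by linear difference equations and hence admits a nonconstant map to $(\mathbb{A}^1,\mathrm{id})$), hence not $1$-based. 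With those adjustments your sketch matches the cited proof.
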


From the proof of \cite[5.12]{Chatzidakis99modeltheory}, one can see that the group $H$ and the formula witnessing the the nonorthogonality of the types are quantifier free - in this case, the types become interalgebraic over over a single realization of $p$.

Finally, we will use several facts about the fixed field $\mc C$ of $\mc U \models ACFA_0$:

\begin{lem} \cite[Section 1]{Chatzidakis99modeltheory} \label{stabembedpsf} 
The fixed field $\mc C$ is psuedofinite and stably embedded. 
\end{lem}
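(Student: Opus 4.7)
The plan is to verify the two claims about $\mc C$ separately, invoking Ax's classical characterization of pseudofinite fields for the first and an automorphism/quantifier-free-type analysis for the second.

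For pseudofiniteness, I would recall that a field is pseudofinite iff it is infinite, perfect, pseudo-algebraically closed (PAC), and has absolute Galois group $\widehat{\mathbb{Z}}$. The fixed field $\mc C$ is infinite by saturation of $\mc U$ and perfect since $\mathrm{char}(\mc C)=0$. For the PAC property, given an absolutely irreducible variety $V$ defined over $\mc C$, one applies the geometric axiom of $ACFA_0$ to the diagonal $\Delta_V \subseteq V \times V^{\sigma}=V \times V$, which projects dominantly onto each factor: this produces $a \in V(\mc U)$ with $(a,\sigma(a)) \in \Delta_V$, i.e., $\sigma(a)=a$, hence $a \in V(\mc C)$. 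For the Galois group computation, any finite Galois extension $L/\mc C$ sitting inside $\mc U$ is $\sigma$-stable (it is the splitting field of a polynomial over $\mc C$, whose roots $\sigma$ permutes), and $\sigma|_L$ is an automorphism of $L$ whose fixed field is exactly $\mc C$; hence $\mathrm{Gal}(L/\mc C)=\langle \sigma|_L \rangle$ is cyclic of order $[L:\mc C]$. Passing to the inverse limit over all finite Galois extensions identifies $\mathrm{Gal}(\mc C^{\mathrm{alg}}/\mc C)$ with $\widehat{\mathbb{Z}}$.

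For stable embedding, I would show that for every parameter set $A \subset \mc U$, every $A$-definable subset $X \subseteq \mc C^n$ is already definable from parameters in $\mc C$. The approach I favor is the automorphism criterion: every automorphism $\tau$ of the pure field $\mc C$ fixing $A \cap \mc C^{\mathrm{alg}}$ pointwise should lift to an automorphism of $\mc U$ fixing $A$ pointwise. The plan is to first extend $\tau$ to the algebraic closure $\mc C^{\mathrm{alg}} \subseteq \mc U$, then, using the saturation of $\mc U$ together with the stationarity of quantifier-free types over algebraically closed difference subfields recalled above, to extend step by step to the difference-algebraic closure of $A \cup \mc C^{\mathrm{alg}}$ and finally to $\mc U$. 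The crucial point is that, because $\sigma$ acts trivially on $\mc C$, the extension can be arranged to commute with $\sigma$ and thus constitute a difference-field automorphism, yielding the desired $\tau$-compatible automorphism of $\mc U$.

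The main obstacle is the stable embedding clause. Pseudofiniteness is essentially a direct consequence of the geometric axioms once Ax's characterization is in hand, while stable embedding is subtler because $ACFA_0$ lacks quantifier elimination and one must manage the interaction between the nontrivial $\sigma$-action on $A$ and its trivial action on $\mc C$. The heart of the argument is organizing a transcendence basis so that $\sigma$-iterates of parameters from $A$ contribute no new algebraic relations among elements of $\mc C$ beyond those definable with parameters from $\mc C$ itself, at which point elimination of imaginaries in the pure algebraically closed field $\mc C^{\mathrm{alg}}$ codes the defining data inside $\mc C$.
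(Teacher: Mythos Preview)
The paper does not prove this lemma; it is quoted as a fact from \cite{Chatzidakis99modeltheory}. So there is no ``paper's proof'' to compare against, and the question is simply whether your sketch is correct.

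Your pseudofiniteness argument is fine: the PAC property via the ACFA axiom applied to the diagonal, and the cyclicity of every finite Galois extension via restriction of $\sigma$, are exactly the standard arguments.

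Your stable-embedding argument, however, has a genuine error in its formulation. You propose to show that every field automorphism $\tau$ of $\mc C$ fixing $A\cap\mc C^{\mathrm{alg}}$ lifts to an automorphism of $\mc U$ \emph{fixing $A$ pointwise}. This can fail outright. Take $a\in A$ with $\sigma(a)-a=c\in\mc C$ but $c\notin A$, and choose $\tau$ with $\tau(c)\neq c$; then any $\hat\tau\in\operatorname{Aut}(\mc U)$ extending $\tau$ and fixing $a$ would have to satisfy $\hat\tau(c)=\hat\tau(\sigma(a)-a)=\sigma(a)-a=c$, contradicting $\hat\tau|_{\mc C}=\tau$. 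More generally, elements of $A$ outside $\mc C^{\mathrm{alg}}$ can be difference-algebraically tied to elements of $\mc C$ that $\tau$ moves.

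The correct automorphism criterion drops $A$ entirely: one shows that every pure-field automorphism of $\mc C$ lifts to a $\sigma$-automorphism of $\mc U$. This simultaneously yields stable embedding and that the induced structure on $\mc C$ is the pure field structure; the conclusion for any particular $A$ then follows a posteriori. The lifting itself goes through the description of types in $ACFA$: for a tuple $c$ from $\mc C$ over $E=\acl(E)$, the difference field $E\langle c\rangle=E(c)$ is determined as a $\sigma$-field by its field structure over $E$ together with $\sigma(c)=c$, so $\tp(c/E)$ is governed by the field-theoretic type of $c$ over $E\cap\mc C$. Your last paragraph gestures at the right amalgamation, but it is attached to the wrong target statement.
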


By the stable embeddedness of Proposition \ref{stabembedpsf}, when $S=(V,f)^{\sharp}$ is qf-$\mc C$-internal, the action of $G$ on $S$ is definably isomorphic to a definable group acting on a definable set in the structure $\mc C$ in the language of rings. Groups definable in pseudofinite fields are characterized by Hrushovski and Pillay \cite{hrushovski1994groups}: 

\begin{prop} \label{psfgroups}
    Let $G$ be a group definable in a pseudofinite field $F$. Then there is an algebraic group $H$ defined over $F$ and a definable virtual isogeny between $G$ and $H(F)$. 
\end{prop}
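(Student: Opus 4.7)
The approach I would take follows the classical Weil--Hrushovski strategy of reconstructing an algebraic group from the model-theoretic data of a generic type together with its stabilizer. Pseudofinite fields are supersimple of SU-rank one (as fields), which gives a well-behaved notion of generic types, independence, and a definable dimension extending the transcendence degree. So the first step is to fix a generic type $p$ of $G$ over $F$ (in the sense of having maximal dimension), pick a realization $g \models p$, and form the algebraic locus $V := \operatorname{locus}(g/F)$ -- an absolutely irreducible algebraic variety over $F$, where irreducibility uses that pseudofinite fields are perfect and pseudo-algebraically-closed.

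Second, I would analyze the group operation on independent generic pairs. If $g_1, g_2 \models p$ are independent, then $g_1 \cdot g_2$ is again generic of $G$, with $\dim(g_1 g_2 / F g_1) = \dim(g_1 g_2 / F g_2) = \dim G$. The group operation therefore induces a partial, definable, dominant function $\mu : V \times V \dashrightarrow V$ on a generic (in fact, large codimension) subset, and by elimination of imaginaries together with the characterization of definable functions in pseudofinite fields (they coincide with rational functions on a relatively definable piece, after a finite correspondence), $\mu$ agrees on a generic subset with a rational map of varieties over $F$. Associativity and the existence of generic inverses transfer from $G$ to $\mu$ by the same genericity considerations.

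Third, apply Weil's group chunk theorem (or its Hrushovski-style generalization): a generically associative, generically invertible rational pre-group-law on an irreducible variety $V/F$ extends birationally to an algebraic group $H/F$, and the birational correspondence $V \dashrightarrow H$ is unique up to isomorphism. Restricting this correspondence to $F$-points yields a definable map between a generic subset of $G$ and a generic subset of $H(F)$ that respects the group laws wherever defined. Passing to the group of germs at the identity and extending by the group laws on both sides, one produces the required definable virtual isogeny, where ``virtual'' accounts for both finite-index subgroups (arising because only generic behavior is controlled) and finite kernels (arising because the birational correspondence is only biregular off codimension-one loci).

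The main obstacle is ensuring the group chunk machinery runs cleanly in the pseudofinite setting rather than over an algebraically closed field. The partial group law $\mu$ is initially only definable, not manifestly rational, and definable sets in $F$ are generally not varieties but only Zariski constructible after base change to $F^{\mathrm{alg}}$. One must therefore control the action of $\operatorname{Gal}(F^{\mathrm{alg}}/F) \cong \widehat{\mathbb{Z}}$ on the data, descend the resulting algebraic group from $F^{\mathrm{alg}}$ back to $F$ (possibly as a twisted form), and verify that the genericity used in the group-chunk step interacts correctly with the finite-cover and imaginary-elimination subtleties of $\operatorname{Th}(F)$. Once this descent is in hand, the virtual isogeny follows formally.
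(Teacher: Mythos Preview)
The paper does not actually prove this proposition. It is stated as a background result and attributed to Hrushovski and Pillay \cite{hrushovski1994groups}; the sentence immediately preceding the statement reads ``Groups definable in pseudofinite fields are characterized by Hrushovski and Pillay.'' So there is no proof in the paper to compare your proposal against.

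That said, your sketch is broadly faithful to the strategy of the cited Hrushovski--Pillay paper: one uses the structure theory of definable sets in pseudofinite fields (Chatzidakis--van den Dries--Macintyre) to see that the group operation is generically given by a rational map on an irreducible variety, and then runs a Weil group-chunk argument to recover an algebraic group $H$ over $F$. One anachronism worth noting: you lean on supersimplicity and SU-rank, machinery which postdates the 1994 paper; Hrushovski and Pillay instead use the dimension and measure coming directly from the CDM counting results. The descent issue you flag at the end is genuine, but in the original argument it is handled by the fact that the relevant data are already $F$-definable and the CDM description supplies $F$-rational pieces, so $H$ is obtained over $F$ without a separate Galois-descent step.
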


A definable virtual isogeny of groups $G, \, H$ is an definable isogeny between definable finite index subgroups $G_1 \leq G$ and $H_1 \leq H$.

\section{Psuedofinite permutation groups} \label{four} 
The following result is a slight rephrasing of the classification of $3$-transitive finite group actions \cite[Theorem 5.2, page 625]{cameron1995permutation}. 

\begin{thm}\label{3fin} Suppose that $G$ is a $3$-transitive finite group. Then $G$ must be one of the following: 
\begin{enumerate}
    \item $M_{11}$, $M_{12}$, $M_{22}$, $M_{23}$, $M_{24}$, or $V_{16}.A_7$
    \item $S_n$ or $A_n$ 
    \item $AGL(d,2)$ acting by affine transformations of a $d$-dimensional vector space over $\m F_2$.
    \item $G$ is a subgroup of $P\Gamma L(2,q)$ containing $PSL(2,q)$ and acting in the natural way on $\m P^1 (\m F_q )$.  
\end{enumerate}
The actions in $(3)$ and $(4)$ are not $4$-transitive. 
\end{thm}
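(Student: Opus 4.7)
The classification into items (1)--(4) is exactly the content of Cameron's theorem, so the plan is to cite it directly, after confirming that our indexing matches the standard one (Mathieu groups and $V_{16}.A_7$ in (1), $S_n, A_n$ in (2), the affine group in (3), and the projective semilinear groups in (4)). The genuine work lies in the closing sentence --- that the actions in (3) and (4) are not $4$-transitive --- which I would verify by exhibiting, in each case, a nontrivial invariant of ordered $4$-tuples of distinct points.

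For (3), I would use that in characteristic $2$ the map $(v_1, v_2, v_3)\mapsto v_1+v_2+v_3$ commutes with every affine transformation $v\mapsto Av+b$ (the three translations sum to $3b = b$). So the relation ``$v_4 = v_1+v_2+v_3$'' on ordered $4$-tuples of distinct points is $AGL(d,2)$-equivariant, and provided $d\geq 3$ it is satisfied by some $4$-tuples (any four lying in a common affine $2$-plane) and fails on others (any four that are affinely independent), ruling out $4$-transitivity. The case $d=2$ gives $AGL(2,2)\cong S_4$ on four points, which is already covered by (2).

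For (4), I would use the cross-ratio: for four distinct points $x_1, \ldots, x_4 \in \mathbb{P}^1(\mathbb{F}_q)$, the value $[x_1,x_2,x_3,x_4]\in \mathbb{F}_q\setminus\{0,1\}$ is a $PGL(2,q)$-invariant, and under a field automorphism $\sigma$ it transforms by applying $\sigma$ to its value. Hence the $G$-orbits on ordered $4$-tuples are in bijection with the $\mathrm{Aut}(\mathbb{F}_q)$-orbits on $\mathbb{F}_q\setminus\{0,1\}$; since $q-2 > [\mathbb{F}_q : \mathbb{F}_p]$ for every $q\geq 5$, there is more than one such orbit and $G$ is not $4$-transitive. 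The residual small cases either have fewer than four points on $\mathbb{P}^1$ or coincide via exceptional isomorphism with groups already listed in (1) or (2) (for instance, $P\Gamma L(2,4)\cong S_5$).

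The only real obstacle is the reliance on CFSG through Cameron's theorem; once that classification is in hand, the $4$-transitivity clause is a short calculation with the right invariant (the sum in characteristic $2$ for (3), the cross-ratio for (4)).
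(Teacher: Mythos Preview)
Your proposal is correct, and in fact goes further than the paper does. In the paper this theorem is not proved at all: it is introduced with the sentence ``The following result is a slight rephrasing of the classification of $3$-transitive finite group actions \cite[Theorem 5.2, page 625]{cameron1995permutation}'' and then simply stated, with the final sentence about (3) and (4) absorbed into the citation. So the paper's ``proof'' is a bare reference to Cameron.

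Your treatment of the closing sentence is a genuine addition. The characteristic-$2$ affine invariant $v_4=v_1+v_2+v_3$ for case (3) and the cross-ratio argument for case (4) are both sound; the small-$q$ exceptions you flag ($q=2,3,4$) do indeed collapse into cases already listed under (2), exactly as you say. One cosmetic point: in (3) the case $d=2$ gives $AGL(2,2)\cong S_4$ acting $4$-transitively on four points, so strictly speaking the claim ``the actions in (3) are not $4$-transitive'' needs the tacit convention $d\geq 3$, which you correctly identify. None of this affects the downstream use in Lemma~\ref{psuint}, where only the infinite (hence large-parameter) instances matter.
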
 

We won't discuss the groups in (1) except to mention that they are specific finite groups. Some of these groups have a higher degree of transitivity than $3$. The groups $S_n$ and $A_n$ are the symmetric and alternating groups, respectively. $AGL (d,2)$ denotes the group of affine transformations of a $d$-dimensional vector space over $\m F_2$. The group $P \Gamma L(2,q)$ is the semi-direct product of $PGL(2,q)$ and the group of automorphisms of $\m F_q$ over $\m F_p$ where $q=p^n.$

\begin{lem} \label{psuint}
Let $G$ be an infinite pseudofinite definable group which has finite SU-rank and acts $3$-transitively on a definable set $X$. Then the action is not $4$-transitive.
More particularly, $X \cong \m P^1 (\mc F)$, $G=PGL_2(F)$ or $PSL_2(F)$ for some psudofinite field $F$. 
\end{lem}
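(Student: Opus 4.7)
The plan is to combine Theorem~\ref{3fin} with an ultraproduct / \L o\'s transfer, then eliminate all but case~(4) using Proposition~\ref{psfgroups} together with the finite SU-rank hypothesis. First, I would represent $G$ and $X$ as ultraproducts of finite objects: write $G=\prod_i G_i/\mathcal U$ and $X=\prod_i X_i/\mathcal U$ with each $G_i$ a finite group acting on a finite set $X_i$. Since $3$-transitivity is first-order, \L o\'s's theorem puts $G_i$ into one of the four cases of Theorem~\ref{3fin} for $\mathcal U$-almost all $i$. Case~(1) is ruled out immediately: those sporadic groups have bounded cardinality, so if the ultrafilter concentrated there, $G$ itself would be finite, contradicting infiniteness.

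To eliminate cases~(2) and~(3) I would invoke Proposition~\ref{psfgroups}: since $G$ has finite SU-rank, $G$ is definably virtually isogenous to $H(F)$ for some algebraic group $H$ over a pseudofinite field $F$, so the cardinality $|G_i|$ is polynomially bounded in $|F_i|$ with a uniform exponent (essentially $\dim H$). Because $G$ acts transitively on $X$ and $X$ inherits finite SU-rank, $|X_i|$ is likewise polynomially bounded in $|F_i|$, which forces $\log|G_i|/\log|X_i|$ to be bounded on the ultrafilter. In case~(2) we would have $|X_i|=n_i$ and $|G_i|\geq n_i!/2$, so the ratio grows like $n_i$ — unbounded, a contradiction. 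In case~(3) we would have $|X_i|=2^{d_i}$ and $|G_i|$ of order $2^{d_i^2}$, so the ratio grows like $d_i$ — again unbounded, a contradiction.

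This leaves case~(4): $PSL_2(q_i)\leq G_i\leq P\Gamma L(2,q_i)$ acting on $\mathbb P^1(\mathbb F_{q_i})$. Taking the ultraproduct gives $PSL_2(F)\leq G\leq P\Gamma L(2,F)$ with $F=\prod \mathbb F_{q_i}/\mathcal U$ pseudofinite and $X\cong \mathbb P^1(F)$. Since $[P\Gamma L(2,q_i):PGL_2(q_i)]=\log_p q_i$ grows without bound with $q_i$, any $G$ strictly above $PGL_2(F)$ on a $\mathcal U$-large set would violate the polynomial bound from Proposition~\ref{psfgroups}; hence $G$ must equal $PSL_2(F)$ or $PGL_2(F)$. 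The last sentence of Theorem~\ref{3fin} — that the actions in~(3) and~(4) are not $4$-transitive — is first-order and transfers across the ultraproduct, giving the non-$4$-transitivity conclusion.

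The main obstacle I expect lies in the final step: pinning $G$ down within the interval $[PSL_2(F),P\Gamma L(2,F)]$. The ``field-automorphism'' quotient $P\Gamma L/PGL$ is the infinite absolute Galois group of a pseudofinite field, which is itself pseudofinite but not of finite SU-rank when attached definably. Extracting from the SU-rank hypothesis — rather than from a mere pointwise counting — that such a strict extension cannot be absorbed into $G$ requires careful bookkeeping of pseudofinite dimensions through Proposition~\ref{psfgroups}, and is the technically most delicate point of the argument.
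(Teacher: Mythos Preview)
Your architecture---pass to an ultraproduct of finite permutation groups, invoke Theorem~\ref{3fin} on the factors, then eliminate cases---is exactly the paper's, and case~(1) is dispatched identically.

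The gap is your repeated appeal to Proposition~\ref{psfgroups}. That result is about groups definable \emph{in a pseudofinite field}; the hypothesis of the lemma is only that $G$ is pseudofinite (as an abstract group) and of finite SU-rank in some ambient supersimple theory. No pseudofinite field $F$ is handed to you, so the virtual isogeny to $H(F)$ and the polynomial cardinality bounds you extract from it are not available as stated. The paper's remark immediately following the proof makes this point explicit: it notes that in the intended \emph{application} $G$ is definable in the fixed field $\mathcal C$, and that in that special case Proposition~\ref{psfgroups} might furnish an alternative route---but the lemma is stated and proved more generally. For cases~(2) and~(3) the paper instead argues via centralizer dimension: a nonprincipal ultraproduct of $S_n$, $A_n$, or $AGL(d,2)$ with $n,d$ unbounded carries arbitrarily long strictly descending chains of centralizers each of infinite index, and this contradicts finite SU-rank directly through the Lascar inequality, with no counting and no auxiliary field.

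For the identification in case~(4)---precisely the obstacle you flag---the paper does not attempt any direct argument. It cites \cite{point1999ultraproducts} to identify the ultraproduct as $G(\prod_{\mathcal F}\mathbb F_q)$ acting on $\mathbb P^1(\prod_{\mathcal F}\mathbb F_q)$, and then invokes \cite[Theorem~5.6]{zou2020pseudo} to pin $G$ down to $PGL_2(F)$ or $PSL_2(F)$. Your worry about controlling the field-automorphism quotient from the SU-rank hypothesis alone is well placed; note in particular that $|P\Gamma L(2,q)|\sim q^3\log_p q$ is still $o(q^4)$, so a crude polynomial bound does not immediately exclude it---one would need the sharper Lang--Weil form of the isogeny, which again presupposes Proposition~\ref{psfgroups}. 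The paper simply outsources this step. Your final observation, that non-$4$-transitivity already transfers from case~(4) of Theorem~\ref{3fin} by {\L}o\'s without any precise identification of $G$, is correct and is exactly what the paper's subsequent remark records.
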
   

\begin{proof}
Any such group $G$ is elementarily equivalent  to an ultraproduct of finite groups $\prod _{\mc F} G_i$ where $\mc F$ is a nonprinciple ultrafilter and $\{i \, | \, |G_i|>n\} \in \mc F$ for any $n \in \m N$, and we use Theorem \ref{3fin}. We can disregard the groups in (1) above. Infinite ultraproducts of the alternating groups, symmetric groups, or $AGL (d,2)$ do not satisfy the hypotheses (e.g. argue via centralizer dimension), we are reduced to case (4). 

So the group is elementarily equivalent to an ultraproduct of subgroups of $P\Gamma L_2 (q)$ for where wlog $q$ is increasing and the action is the natural one on $\m P^1 (\m F_q)$. By a result of \cite{point1999ultraproducts}, this group is isomorphic to $G (\prod _{\mc F} \m F_q)$ acting on $\m P^1 (\prod _{\mc F} \m F_q).$ Then \cite[Theorem 5.6]{zou2020pseudo} implies that $G=PGL_2(F)$ or $PSL_2(F)$.
\end{proof}

\begin{rem}
The appeal to \cite[Theorem 5.6]{zou2020pseudo} wraps up the proof cleanly, but for our main results we ultimately only need to know that the group action can not be $4$-transitive. It might also be useful in some applications to note that if such a group action is $3$-transitive, then it is sharply $3$-transitive. 

We will, in Section \ref{any3sec} only use the previous Lemma in the case that the group action is definable in $\mc C$, the fixed field of our difference field. In that case, one might be able to replace the use of Theorem \ref{3fin}, by a more careful analysis appealing to the classification of definable groups in pseudofinite fields (Proposition \ref{psfgroups}) and some type of O'Nan-Scott argument in the pseudofinite supersimple finite rank context, an area that is under active development \cite{karhumaki2025primitive}. 
\end{rem}

\section{When any four generic solutions are independent} \label{any3sec}
Throughout this section, $p$ is a rational $\sigma$ type over $k$, an algebraically closed difference field. 

\begin{defn} We say that a rational $\sigma$-type $p$ satisfies $C_n$ if for $a_1, \ldots , a_n$ distinct realizations of $p$, $a_i 
\ind_k a_1, \ldots , a_{i-1}$ for each $i=2, \ldots , n$.
\end{defn} 

So, in general $\ldots  C_n \implies C_{n-1} \implies \ldots \implies C_2$. Our main result is to prove most of the converse implications in this chain by showing that $C_4 \implies C_n$ for all $n$. As we mentioned in the introduction, because autonomous equations have additional obvious sources of invariant subvarieties given by the iterates of graphs of the dynamical system, it is easy to see that $C_2$ fails. Even in the non-autonomous case, some aspects of our proof might be possible to improve on, so that in the case that the difference equation is of order greater than one, we don't know examples precluding $C_3$ rather than $C_4$. 

\begin{lem} \label{any2}
Every rational $\sigma$-type $p$ satisfying $C_2$ is either 
\begin{itemize} 
\item almost qf-$\mc C$-internal and weakly qf-orthogonal to $\mc C$ 
\item minimal and locally modular. 
\end{itemize} 
\end{lem}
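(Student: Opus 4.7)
The plan is to split along the Chatzidakis--Hrushovski dichotomy (Lemma \ref{dichot}). The common engine throughout is a uniform \emph{fibre argument}: if $f\colon p \to q$ is a $k$-rational map of rational types and the generic fibre $\tp(a/kf(a))$ has positive SU-rank, then by saturation I pick $a' \neq a$ realizing $\tp(a/kf(a))$; then $a' \models p$ (since $a' \equiv_k a$), and $a,a'$ share $f(a) \in \acl(ka) \cap \acl(ka') \setminus k$, so $a' \nind_k a$, contradicting $C_2$. Hence any such map must have finite generic fibres, making $p$ and $q$ interalgebraic over $k$.

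First I apply the fibre argument to a semi-minimal analysis of $p$ (Lemma \ref{semimin}): if $SU(p) \ge 2$, I extract $b \in \acl(ka) \setminus \acl(k)$ with $0 < SU(b/k) < SU(p)$, producing a fibre of positive rank. So $C_2$ forces $SU(p) = 1$, and in particular $p$ is minimal. If moreover $p \perp \mc C$, then Lemma \ref{dichot} yields that $p$ is $1$-based, and a minimal $1$-based type is locally modular, settling the second alternative.

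From here I may assume $p \not\perp \mc C$. By Lemma \ref{rationalreduction} there is a $k$-rational map $p \to q$ onto a positive-dimensional qf-$\mc C$-internal rational type $q$; the fibre argument forces $p$ and $q$ to be interalgebraic over $k$, so $p$ is almost qf-$\mc C$-internal. It remains to establish weak qf-$\mc C$-orthogonality, which is the main obstacle. Assume towards a contradiction that $p$ is not weakly qf-$\mc C$-orthogonal. Using stable embeddedness of $\mc C$ (Lemma \ref{stabembedpsf}) together with the almost-internality of $p$, I extract a nontrivial $k$-rational map $g\colon p \to r$ onto a positive-dimensional rational type $r$ whose realizations lie in $\mc C$, and the fibre argument once more makes $p$ and $r$ interalgebraic over $k$.

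To conclude, I use the field structure of $\mc C$. Since $SU(r) = SU(p) = 1$, after composing with a coordinate projection I may take $r$ to be the rational type of a transcendental element $c \in \mc C$ over $k$. But then $2c,\, c+1,\, c^2,\ldots$ all realize $r$ and lie in $\acl(kc)$, producing many distinct realizations of $r$ inside $\acl(kc)$. Transporting through the $k$-interalgebraicity with $p$ yields a realization $a' \ne a$ of $p$ with $a' \in \acl(ka)$, contradicting $C_2$. The subtle step is the extraction of $g$ in the previous paragraph: one must convert the bare forking failure $a \nind_k c$ (with $c$ a tuple from $\mc C$) into an honest $k$-rational morphism from $p$ into a rational type internal to $\mc C$, which genuinely uses stable embeddedness together with almost-internality.
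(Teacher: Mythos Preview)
There is a genuine gap in your second paragraph. You claim that if $SU(p)\ge 2$ then the semi-minimal analysis of $a\models p$ yields some $b\in\acl(ka)\setminus\acl(k)$ with $0<SU(b/k)<SU(p)$, to which you can apply the fibre argument. But a semi-minimal analysis of length~$1$ produces no such intermediate $b$: if $p$ is already almost internal to (the conjugates of) a rank-$1$ type, the analysis terminates in a single step regardless of how large $SU(p)$ is. This is exactly the situation covered by the \emph{first} alternative of the lemma, where $p$ is almost qf-$\mc C$-internal and may well have $\dim p>1$. Your argument, if it went through, would prove outright that $C_2$ forces minimality; but then the whole point of the subsequent Lemma~\ref{any3} (the pseudofinite binding-group analysis ruling out $4$-transitivity) would evaporate, since that lemma is precisely there to handle the non-minimal qf-$\mc C$-internal case.

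Note too that your fibre argument needs $f\colon p\to q$ to be a map of \emph{rational} types, so that $k\langle f(a)\rangle=k(f(a))$ is a difference subfield; an arbitrary coordinate of $a$ will not have $\sigma(b)\in k(b)$, so one cannot simply project. The paper's proof instead uses the semi-minimal analysis only to reduce to length~$1$ (arguing as you do that length $\ge 2$ contradicts $C_2$), and then splits on whether the rank-$1$ type to which $p$ is almost internal is $1$-based or not. In the $1$-based case a canonical-base argument gives minimality; in the non-$1$-based case the dichotomy (Lemma~\ref{dichot}) gives non-orthogonality to $\mc C$, Lemma~\ref{rationalreduction} plus your fibre argument gives almost qf-$\mc C$-internality, and failure of weak orthogonality is ruled out directly via Lemma~\ref{weakortho}, which produces a $k$-rational map to $(\mathbb A^1,\id)$ whose nontrivial fibre again contradicts $C_2$. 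Your treatment of weak orthogonality in the final two paragraphs is more elaborate than necessary, but the real problem is upstream.
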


\begin{proof}
Consider the semi-minimal analysis $(a_1, \ldots, a_n)$ of $a \models p$ and suppose that the dimension of $p$ is $d$. We claim that $n=1$. If $n>1$, then consider a realization of $b_n \models \tp (a_n/ ka_1 \ldots a_{n-1} )$ not equal to $a_n$. But then considering  $(a_1, a_2 \ldots, a_n, a_1, \ldots a_{n-1}, b_n)$ This tuple is interalgebraic with a tuple of realizations of $p$, $(a,b)$ and by the hypotheses it follows that $td_k (k (a,b)) = td_k (k \langle a, b \rangle ) = td _k (k \langle a_1, a_2 \ldots, a_n, a_1, \ldots a_{n-1}, b_n \rangle )$, but $d < td _k (k \langle a_1, a_2 \ldots, a_n, a_1, \ldots a_{n-1}, b_n \rangle ) <2d$, contradicting $C_2.$ 

So, the type $p$ can be assumed to be internal to the collection of conjugates of a fixed type with $SU$-rank $1$. 
First, assume that this type is $1$-based, then so is $p$ by Fact \ref{semimin}. Since $p$ is internal to a collection of minimal $1$-based types, any forking extension of $p$ is also orthogonal to $\mc C$ and thus $1$-based by Lemma \ref{dichot}. Now we claim that if $C_2$ holds then $p$ is minimal.

To see this, note that if not, then for some $n$ there is a sequence of \emph{distinct} realizations of $p$, $a_1 \ldots a_n$ so that $tp (a_n / k a_1 \ldots a_{n-1})$ is a nonalgebraic forking extension of $p$. But then by $1$-basedness, canonical base is algebraic over over a single realization of this type which extends $p$, so we can see $C_2$ fails already. 

If the type $p$ is not $1$-based, then by Fact \ref{dichot}, $p$ is non-orthogonal to $\mc C$. Then by Fact \ref{rationalreduction} there is a $k$-rational map $f:p\rightarrow q$ to a rational $\sigma$-type which is qf-$\mc C$-internal. If the fibers of the map $f$ are infinite, then $p$ has a forking extension over the image $f(a)$ of a realization of $p$. Weak orthogonality must hold since otherwise $C_2$ fails witnessed by the invariant rational function giving a map to $(\m A^1, id)$. 
\end{proof}

\begin{lem} \label{any3modular}
If $p \in S(k)$ is minimal and orthogonal to $\mc C$, then $C_4 \implies C_n$ for all $n$. 
\end{lem}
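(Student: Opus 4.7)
The plan is to split on whether $p$ has trivial geometry, using that by Lemma \ref{dichot} the type $p$ is $1$-based (since $p \perp \mc C$).

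\textbf{Trivial case.} When $p$ has trivial geometry, I would argue $C_2 \Rightarrow C_n$ for all $n$, which suffices since $C_4 \Rightarrow C_2$ is immediate. The reason is that for any distinct realizations $a_1, \ldots, a_n$ of $p$, if $a_i \in \acl(k, a_1, \ldots, a_{i-1})$ then triviality forces $a_i \in \acl(k, a_j)$ for some $j < i$; combined with $C_2$ and non-algebraicity of $p$ over $k$, this forces $a_i = a_j$, contradicting distinctness.

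\textbf{Nontrivial case.} Here I plan to exhibit four distinct realizations of $p$ over $k$ witnessing $\neg C_4$ directly, making the implication vacuous. By Lemma \ref{minnontriv} (and the subsequent remark), $p$ is nonorthogonal to the generic type $q$ of a minimal subgroup $H$ of $\m G_m$ or of a simple abelian variety, with $p$ and $q$ becoming interalgebraic over a single realization $a_0 \models p$. Fixing such an $a_0$ and working over $k(a_0)$, I would use the group law on $H$ to produce independent $y_1, y_2 \models q|k(a_0)$ and a third generic $y_3 = -(y_1 + y_2)$, giving a triple of pairwise-distinct pairwise-independent generics of $H$ of joint transcendence degree $2$ over $k(a_0)$. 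Through the interalgebraicity these transport to $a_1, a_2, a_3 \models p|k(a_0)$ of the same joint transcendence degree $2$ over $k(a_0)$. The four-tuple $(a_0, a_1, a_2, a_3)$ then consists of four distinct realizations of $p$ over $k$ with
$$\trdeg(k(a_0, a_1, a_2, a_3)/k) = \trdeg(k(a_0)/k) + \trdeg(k(a_0, a_1, a_2, a_3)/k(a_0)) = 1 + 2 = 3 < 4,$$
so they cannot be mutually independent, and $\neg C_4$ holds over $k$.

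The main subtle step will be the descent from a dependence over $k(a_0)$ to one over $k$: the triple $(a_1, a_2, a_3)$ alone may very well remain independent over $k$, because the group-theoretic relation among the $y_i$ involves the parameter $a_0$ essentially. The plan is to circumvent this by \emph{absorbing} $a_0$ into the tuple, turning a $k(a_0)$-dependent triple into a $k$-dependent four-tuple — and this is precisely why $C_4$ (rather than $C_3$) is the natural hypothesis in the orthogonal case.
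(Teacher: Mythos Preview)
Your argument is correct, but it proceeds quite differently from the paper's. The paper gives a uniform proof that does not split on the geometry of $p$: assuming $C_n$ fails for some minimal $n$, one extracts from the dependent $n$-tuple a forking extension of $p^{(2)}$ (the type of an independent pair of distinct realizations) over the remaining parameters; since $p$ is $1$-based by Lemma~\ref{dichot}, so is $p^{(2)}$, and hence the canonical base of this forking extension lies in $\acl$ of a single independent realization of $p^{(2)}$. That second realization, together with the original pair, produces four distinct realizations of $p$ witnessing $\neg C_4$.

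Your approach instead splits on triviality. In the trivial case you obtain the sharper $C_2 \Rightarrow C_n$, and in the nontrivial case you invoke the structural classification of Lemma~\ref{minnontriv} to show $C_4$ \emph{always} fails, so the implication is vacuous. Both routes are valid; the paper's is shorter and uses $1$-basedness abstractly via canonical bases, while yours is more explicit and extracts a bit of extra information (the $C_2 \Rightarrow C_n$ in the trivial case, and the unconditional failure of $C_4$ in the nontrivial locally modular case).

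One small correction: your transcendence-degree count in the nontrivial case silently assumes $\dim p = 1$, but minimality here means $SU$-rank $1$, and the simple abelian variety in Lemma~\ref{minnontriv} may well have algebraic dimension greater than $1$. This does not damage the argument: what you actually need is $a_3 \in \acl(k, a_0, a_1, a_2)$ together with $a_3 \notin \acl(k)$, which immediately gives $a_3 \nind_k (a_0, a_1, a_2)$ and hence $\neg C_4$, irrespective of the precise transcendence degrees. You should phrase the conclusion in terms of forking (or replace the specific numbers by $3\dim p < 4\dim p$) rather than asserting the totals $1+2=3<4$.
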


\begin{proof}
If $C_n$ fails for some $n$, then perhaps over some field extension, $K$, there is a forking extension of $p^{(2)}$ in which for a realization $(a,b)$, we have $a \neq b$, but by Fact \ref{dichot}, $p^{(2)}$ is $1$-based, so there is a canonical base for this forking extension over a single realization of $p^{(2)}$. But then the realization of the type $p^4$ which realizes this forking extension over the canonical base shows that $C_4$ fails. 
\end{proof}

\begin{lem} \label{any3}
Let $p$ be the $\sigma$-type which is the qf-generic of $S = (V,f)^ \sharp$. Suppose that $C_n$ fails for some $n$. Then $C_4$ fails of $p$.  
\end{lem}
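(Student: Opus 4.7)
I would prove the contrapositive: assume $C_4$ holds and deduce $C_n$ for every $n \ge 4$. Since $C_4$ implies $C_2$, Lemma \ref{any2} puts $p$ into exactly one of two cases. If $p$ is minimal and locally modular (hence orthogonal to $\mc C$), Lemma \ref{any3modular} directly yields $C_n$ for all $n$. All the content therefore lies in showing that the other case---$p$ almost qf-$\mc C$-internal and weakly qf-orthogonal to $\mc C$---is actually incompatible with $C_4$, so it never arises.

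\textbf{Galois group in the internal case.} Attach to $p$ the qf-Galois group $G = Aut_{qf}(p/\mc C)$ acting on $p(\mc U)$. By the stable embeddedness of the pseudofinite fixed field $\mc C$ (Lemma \ref{stabembedpsf}), $G$ is definably isomorphic to a group interpretable in $\mc C$, and Proposition \ref{psfgroups} identifies this, up to virtual isogeny, with the $\mc C$-points of an algebraic group defined over $\mc C$. Hence $G$ is a pseudofinite definable group of finite SU-rank. Since $p$ is the qf-generic of a positive-dimensional $\sigma$-variety, $p(\mc U)$ is infinite, and so $G$ is infinite.

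\textbf{$C_4$ forces 4-transitivity; Lemma \ref{psuint} forbids it.} The crux is to convert $C_4$ into generic 4-transitivity of $G$ on $p(\mc U)$. Starting from $p \perp^w \mc C$ and using $C_4$ to place any four distinct realizations of $p$ in the independent configuration $p^{(4)}$, I would upgrade $p \perp^w \mc C$ to $p^{(4)} \perp^w \mc C$ by a Galois-theoretic argument: any witness $c \in \mc C \setminus k^{alg}$ to $p^{(4)} \not\perp^w \mc C$ would produce a non-diagonal invariant subvariety of $V^4$ defined over $k(c)$, whose $k$-Galois conjugates assemble into a non-diagonal invariant subvariety of $V^4$ already defined over $k$, contradicting $C_4$. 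Lemma \ref{fact1} then delivers generic 4-transitivity, and in particular 3-transitivity on pairwise-distinct triples, of the action of $G$ on $p(\mc U)$. But Lemma \ref{psuint}, whose proof invokes the classification of finite simple groups, says that an infinite pseudofinite definable group of finite SU-rank acting 3-transitively on a definable set must be $PSL_2(F)$ or $PGL_2(F)$ acting on $\mathbb{P}^1(F)$, neither of which is 4-transitive. This contradiction rules out the almost-internal case.

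\textbf{Main obstacle.} The step I expect to require the most care is the upgrade $C_4 \Rightarrow p^{(4)} \perp^w \mc C$, since it is what bridges the independence-theoretic hypothesis over $k$ to a statement about the $\mc C$-fixing Galois group $G$ rather than the larger $k$-fixing automorphism group. The minimal subcase $SU(p)=1$, to which Lemma \ref{fact1} does not directly apply, should be handled in parallel: there $p(\mc U)$ is definably identified with a strongly minimal curve in $\mc C$, and the same $PSL_2/PGL_2$-versus-$4$-transitivity obstruction from Lemma \ref{psuint} goes through.
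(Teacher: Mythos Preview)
Your overall architecture matches the paper's exactly: reduce via Lemmas \ref{any2} and \ref{any3modular} to the qf-$\mc C$-internal, weakly $\mc C$-orthogonal case, attach the binding group $G=Aut_{qf}(p/\mc C)$, push $C_4$ up to $4$-transitivity of $G$ on $p(\mc U)$, and finish with Lemma \ref{psuint}. Two substantive differences deserve comment.

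\textbf{The descent argument is not valid as written.} You propose to pass from a non-diagonal invariant subvariety of $V^4$ over $k(c)$, with $c\in\mc C\setminus k^{alg}$, to one over $k$ by ``assembling $k$-Galois conjugates''. But $k$ is algebraically closed, so $c$ is transcendental over $k$; there is no finite Galois orbit to take a union over, and the family of fibres $f^{-1}(c')$ as $c'$ ranges over conjugates of $c$ is Zariski dense in $V^4$, not a proper subvariety. So this step does not produce the contradiction with $C_4$ you want. The paper does not attempt a descent argument here at all: it simply invokes Lemma \ref{fact1} to obtain \emph{generic} $4$-transitivity and then upgrades, as described next.

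\textbf{You are missing the isolation step.} The paper's bridge from $C_4$ to honest $4$-transitivity uses an input you do not mention: by \cite[Theorem 5.4]{kamensky2024binding}, under the standing hypotheses $p$ is isolated by a quantifier-free formula. Hence $p(\mc U)$ is a definable set, and $C_4$ says that \emph{every} $4$-tuple of distinct elements of $p(\mc U)$ is independent and so realizes $p^{(4)}$. Combined with generic $4$-transitivity from Lemma \ref{fact1}, this forces the $G$-action to be $4$-transitive on all of $p(\mc U)$, which is what Lemma \ref{psuint} needs. The paper also cites \cite[Proposition 2.10]{kamensky2024binding} to transport the action into the fixed field before invoking Lemma \ref{psuint}; your appeal to stable embeddedness is along the right lines but this birational interdefinability is the precise mechanism.

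Your flagging of the $SU(p)=1$ subcase is reasonable; the paper's proof, like yours, routes through Lemma \ref{fact1}, whose stated hypothesis is $SU(p)>1$.
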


\begin{proof}
By Lemmas \ref{any2} and \ref{any3modular}, we can assume that $p$ is qf-$\mc C$-internal. Suppose, for a contradiction, that $C_4$ holds. By Fact \ref{fact1}, we must have that $G = Aut_{qf}(p(\mc U)/ \mc C)$ acts generically $4$-transitively. In this case, by \cite[Theorem 5.4]{kamensky2024binding} the type $p$ is isolated by a quantifier free formula (given by excluding a proper subvariety of $V$). The condition $C_4$ implies that any four realizations are independent, and thus have the same qf-type over $k$. Thus, $G$ acts $4$-transitively. By Proposition 2.10 of \cite{kamensky2024binding}, $p$ is qf-interdefinable (birational) with a tuple of elements of the fixed field over some $\sigma$-field extension of $k$. Thus, $G$ acting definably on $p$ is definably isomorphic to the action of a definable group acting definably on a definable subset of the fixed field. But then by Lemma \ref{stabembedpsf}, we are in the setting of Lemma \ref{psuint}, so we can see that this is impossible. 
\end{proof}

\begin{thm} \label{any3>1}
Let $p$ be the generic type of $(V, \phi)^\sharp$. If $C_4$ holds of $p$ then $p$ is minimal and $C_n$ holds of $p$ for all $n \in \m N.$
\end{thm}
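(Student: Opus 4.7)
The plan is to combine Lemmas \ref{any2}, \ref{any3modular}, and \ref{any3}. Since $C_4$ trivially implies $C_2$, Lemma \ref{any2} applies to $p$ and returns one of two alternatives: either (a) $p$ is almost qf-$\mc C$-internal and weakly qf-$\mc C$-orthogonal, or (b) $p$ is minimal and locally modular. First I would dispense with case (b): here minimality is immediate, and Lemma \ref{any3modular} hands over $C_n$ for every $n$.

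The substantive work would be case (a), where I aim to force $SU(p) = 1$. My plan is to argue by contradiction: assume $SU(p) > 1$, and observe that $C_4$ ensures that $p^{(4)}$ remains weakly qf-$\mc C$-orthogonal (a witness of $\mc C$-dependence among four realizations would produce an algebraic relation violating $C_4$). Then Fact \ref{fact1} applied with $d = 4$ yields a generically $4$-transitive action of $G = \mathrm{Aut}_{qf}(p(\mc U)/\mc C)$ on $p(\mc U)$. From here I would reuse the final steps of the proof of Lemma \ref{any3}: upgrade generic $4$-transitivity to genuine $4$-transitivity using \cite[Theorem 5.4]{kamensky2024binding} together with the fact that $C_4$ makes any four distinct realizations share a qf-type over $k$; transport the action to a definable group acting definably on a subset of $\mc C$ via Proposition 2.10 of \cite{kamensky2024binding} and Lemma \ref{stabembedpsf}; and invoke Lemma \ref{psuint} to get a contradiction, since no infinite pseudofinite definable group of finite $SU$-rank can act $4$-transitively on an infinite set. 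Consequently $SU(p) = 1$, and $p$ is minimal in case (a) as well.

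With minimality now established in both cases, the remaining conclusion that $C_n$ holds for all $n$ drops out immediately from the contrapositive of Lemma \ref{any3}: were $C_n$ to fail for some $n$, then $C_4$ would fail, contradicting the hypothesis.

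The hard part is not any of this re-assembly, but rather the single ingredient already buried in Lemma \ref{any3} and upstream in Lemma \ref{psuint}: forcing a pseudofinite $4$-transitive action in the qf-$\mc C$-internal subcase and killing it via the classification of finite $3$-transitive groups. The present theorem is essentially a repackaging that arranges for minimality to fall out as a bonus from that same contradiction.
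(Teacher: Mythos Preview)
Your proposal is correct and follows essentially the same approach as the paper, combining Lemmas~\ref{any2}, \ref{any3modular}, and~\ref{any3}. The paper's proof is terser---it invokes the argument of Lemma~\ref{any3} to rule out the qf-$\mc C$-internal alternative entirely (concluding $p$ is orthogonal to $\mc C$, hence in case~(b)) rather than re-deriving that argument to force $SU(p)=1$ in case~(a) and then citing Lemma~\ref{any3} again---but the underlying logic is the same.
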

\begin{proof}
By Lemma \ref{any3}, it must be that $p$ is orthogonal to $\mc C$ and $C_4$ holds. But then Lemmas \ref{any2} and \ref{any3modular} imply that $C_n$ holds for all $n \in \m N$. 
\end{proof}

\subsection{Translation back to invariant subvarieties} We now mention the translation of the main theorems of this section back to the language of invariant subvarieties used in the introduction. Supposing that $C_n$ fails for $p,$ the generic type of $(V, \phi)$ means that there is a difference-algebraic relation over $k$ between an $n$-tuple of distinct realizations of the type $p$, $(a_1, \ldots , a_n)$. Since $\sigma (a_i) \in k(a_i)$, this gives an algebraic relation over $k$ between $a_1, \ldots , a_n$. Then the Zariski-closed set $W$ which is the zero set of the polynomials over $k$ which vanish at $(a_1, \ldots , a_n)$ is an irreducible invariant $k$-subvariety of $V^n$. Since $a_i \models p$, the projection to the $i^{th}$ copy of $V$ is dominant. Since the realizations $a_1, \ldots , a_n$ are distinct, the variety $W$ is not contained in any generalized diagonal. So, $W \subset V^n$ is a non-diagonal irreducible invariant subvariety. Thus we can see Theorem \ref{any3>1} implies Theorem \ref{mainthm}.

\subsection{The degree of nonminimality and forking subvarieties}
We now prove Theorems \ref{nmdeg} and \ref{nmdegnon}. 
   \begin{thm} \label{2qfisfun}
    Suppose $(X, \phi)$ is an autonomous algebraic dynamical system. If $(X^n, \phi^{\times n})$ has a nonminimal forking subvariety for some $n$, then already $(X^3, \phi ^{\times 3})$ has a nonminiml forking subvariety. 
   \end{thm}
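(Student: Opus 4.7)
The plan is to translate Theorem~\ref{nmdeg} into the model-theoretic language of Section~\ref{three}. Let $p \in S_{qf}(k)$ be the rational generic type of $(X,\phi)^\sharp$. In the autonomous case $\sigma|_k = \id$, so $k \subseteq \mc C$. A nonminimal forking subvariety of $(X^n,\phi^{\times n})$ is, by its fiber-dimension condition, exactly the Zariski closure of the locus of $n$-tuples realizing a non-algebraic forking extension of $p$ witnessed over the remaining $n-1$ coordinates; hence Theorem~\ref{nmdeg} is equivalent to $\nmdeg(p) \leq 2$. I will assume for contradiction that $p$ is non-minimal and $\nmdeg(p) \geq 3$, and derive a contradiction, mirroring the strategy of Lemma~\ref{any3} but with $3$-transitivity in place of $4$-transitivity.

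First I would reduce to the case where $p$ is qf-$\mc C$-internal, following the case analysis in Lemma~\ref{any2}. A semi-minimal analysis (Lemma~\ref{semimin}) yields minimal components of $SU$-rank $1$, which by the dichotomy (Lemma~\ref{dichot}) are each either $1$-based or non-orthogonal to $\mc C$. In the $1$-based case, Lemma~\ref{semimin} makes $p$ itself $1$-based, so canonical bases of non-algebraic forking extensions are algebraic over a single realization of $p$, forcing $\nmdeg(p) \leq 1$ and contradicting our assumption. Otherwise, Lemma~\ref{rationalreduction} gives a $k$-rational map $p \to q$ with $q$ qf-$\mc C$-internal; infinite fibers of this map would again force $\nmdeg(p) \leq 1$, so we may assume $p$ itself is qf-$\mc C$-internal. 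Enlarging the base to ensure weak qf-$\mc C$-orthogonality (Lemma~\ref{weakortho} is available since $k \subseteq \mc C$), we form the Galois group $G = Aut_{qf}(p/\mc C)$.

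The hypothesis $\nmdeg(p) \geq 3$ translates into generic $3$-transitivity of $G$ on $p(\mc U)$: for every independent pair $(a_1,a_2)$ of realizations of $p$, the stabilizer $\mathrm{Stab}_G(a_1,a_2)$ acts transitively on realizations of $p$ independent from $\{a_1,a_2\}$. After restricting $p$ to the qf-formula isolating it via \cite[Theorem~5.4]{kamensky2024binding} (as in the proof of Lemma~\ref{any3}), this upgrades to genuine $3$-transitivity on a definable set. Stable embeddedness of $\mc C$ (Lemma~\ref{stabembedpsf}) together with qf-$\mc C$-internality then realizes the action as that of a definable group on a definable set in the pseudofinite field $\mc C$ (Proposition~\ref{psfgroups}). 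Applying Lemma~\ref{psuint}, $G$ must be elementarily equivalent to $PGL_2(F)$ (or $PSL_2(F)$) acting naturally on $\mathbb{P}^1(F)$ for some pseudofinite $F$.

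The main obstacle, and the contradiction, is the final step. The $PGL_2(F)$-action on $\mathbb{P}^1(F)$ is sharply $3$-transitive, so $p(\mc U)$ is in definable bijection with the $SU$-rank $1$ set $\mathbb{P}^1(F)$, forcing $\dim p = 1$ and hence $p$ minimal, contradicting the non-minimality of $p$. The $PSL_2$ alternative either collapses to the same $\dim p = 1$ contradiction, or is ruled out at the classification step, since $PSL_2$ of a pseudofinite characteristic-zero field is not $3$-transitive on $\mathbb{P}^1$ once non-squares exist. The delicate part of the proof is verifying that the semi-minimal reduction and the passage to the qf-isolating formula preserve the $\nmdeg(p) \geq 3$ hypothesis, but this is the same sort of bookkeeping already carried out in Lemmas~\ref{any2} and~\ref{any3}, and in the differential-algebraic analogue of \cite{freitag2023degre}.
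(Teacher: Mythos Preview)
Your proposal has a genuine gap at the crucial step: the claimed translation ``$\nmdeg(p) \geq 3$ implies $G$ acts generically $3$-transitively'' is not correct. Generic $d$-transitivity of the binding group is, by Lemma~\ref{fact1}, equivalent to $p^{(d)}$ being weakly qf-$\mc C$-orthogonal, i.e.\ to the nonexistence of a new constant in $k(a_1,\ldots,a_d)$. The condition $\nmdeg(p)\geq 3$, on the other hand, says there is no non-algebraic forking extension of $p$ over $k(a_1,a_2)$. These are different statements: a failure of generic $3$-transitivity produces a constant $c\in k(a_1,a_2,a_3)\cap\mc C$ outside $k^{alg}$, which yields an invariant rational function on $X^3$ and hence a forking extension of $p$ over \emph{three} realizations together with $c$, not over two. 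So from non-$3$-transitivity you only read off a forking subvariety of $X^4$, not of $X^3$, and the contradiction does not close. Your argument, if it worked, would make no use of the autonomous hypothesis and would prove $\nmdeg(p)\leq 2$ in the general nonautonomous case as well; the paper explicitly records that only the weaker bound $\dim X+3$ is available there (Theorem~\ref{nmdegnon}), and improving it is left to forthcoming work.

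The paper's proof takes a different route in the qf-$\mc C$-internal case. Rather than invoking Lemma~\ref{psuint}, it appeals to Corollary~B of \cite{bell2024invariant}, which is specific to the autonomous situation and guarantees directly that some $X^2$ already carries a nonconstant $\phi$-invariant rational function $f:X^2\to\m A^1$. One cannot simply take a fibre of $f$ over a point of $k^{alg}$ to land in $X^2$ (the paper notes this explicitly); instead one equates the induced invariant functions on the $(1,2)$ and $(1,3)$ copies inside $X^3$ to produce the required nonminimal forking subvariety there. This is where the bound $3$ rather than $2$ comes from. The $1$-based and infinite-fibre cases in your reduction agree with the paper's and are fine; it is only the internal case that needs the Bell--Moosa--Satriano input rather than the permutation-group classification.
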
 

\begin{proof}
First suppose that $p$ is not orthogonal to $\mc C$. By Lemma \ref{rationalreduction}, let $g: p \rightarrow q$ be a $k$-rational map such that $q$ is positive dimensional and $qf$-$\mc C$-internal. We can assume that $p=q$, since if $\dim p= \dim q$ in the proof, then it is fine to replace $p$ by $q$. If the difference in their dimensions is positive let $a_1$ be a generic realization of the quantifier free type of $a$ over $k \langle g(a) \rangle $. Then the locus of $(a, a_1)$ yields an invariant forking subvariety of $(X^2 , \phi^{\times 2})$. 

When $p$ is itself $qf$-$\mc C$-internal, there is, by \cite[Corollary B]{bell2024invariant}, a $\phi$-invariant rational function $f : X^2 \rightarrow \m A^1$. The same argument as above, choosing two generic points of a generic fiber of $f$ yields an invariant forking subvariety. The fiber of $f$ above a point $c \in \mc C$ containing an irreducible invariant component of a fixed dimension is a definable property which holds of at the generic point of the image of $f$ in $\mc C$. Thus it holds on a positive measure subset of $\mc C$. If this set of positive measure contained a point in the algebraic closure of the empty set, the fiber over this point would give a forking subvariety of $X^2$. As it stands one can only see a forking subvariety of $X^3$ obtained by setting the induced invariant rational function on the product first and second copies of $X$ equal to the induced invariant rational function on the product of the first and third copies of X.\footnote{There was a mistake at this point in early versions of this manuscript, in which it a fiber above a point in $\mc C$ algebraic over the empty set was considered.} 

So, we are reduced to the case that $p$ is orthogonal to $\mc C$. We can assume that $p$ is semi-minimal. Then if there is for some $n$, an invariant forking subvariety of $V^n$, then $p$ has a nonalgebraic forking extension, $q$. But in this case, by Lemma \ref{dichot}, $q$ is $1$-based - so the canonical base of $b\models q$ is algebraic over a single realization of the type, $a$. So the locus of $(a,b)$ over $k$ gives an invariant forking subvariety. 
\end{proof}

When we drop the assumption that the $\sigma$-variety is autonomous, each part of the proof of the previous theorem works identically except for the use of \cite[Corollary B]{bell2024invariant}. Replacing \cite[Corollary B]{bell2024invariant} with the analogous result of \cite{kamensky2024binding} in the nonautonomous case with the worse bound of $\dim V +3$ yields a proof of Theorem \ref{nmdegnon}.

\subsection{Further work}
There are many interesting variants of the results pursued above which we leave for future work. Perhaps the most interesting question is the determination of the \emph{forking degree} or the \emph{degree of nonminimality} \cite{baldwin2024simple, oberwolfachFdeg} for quantifier free types in $ACFA$. It seems likely that a careful analysis of binding groups might yield an absolute bound in the nonautonomous case.

Can one show that Theorem \ref{any3>1} is optimal in the sense that $4$ can not be replaced with $3$? In the case that $p$ has dimension one, this should be precluded by, for instance, the difference Riccati equations of \cite{nishioka2018differential}. When the dimension of the $\sigma$-variety $(X, \phi)$ is larger than one, for the binding group action to be $3$-transitive, it must be that the field appearing in the classification result of Lemma \ref{psuint} must be the fixed field of $\sigma^{\dim X  }$. We don't have any particular obstruction to this happening, but we also don't know an example. Thanks to Remi Jaoui for pointing out this issue. 

\bibliographystyle{plain}
\bibliography{research}

\begin{thebibliography}{10}

\bibitem{10.1215/00127094-1384773}
Matthew Baker and Laura Demarco.
\newblock {Preperiodic points and unlikely intersections}.
\newblock {\em Duke Mathematical Journal}, 159(1):1 -- 29, 2011.

\bibitem{baldwin2024simple}
John Baldwin, James Freitag, and Scott Mutchnik.
\newblock Simple homogeneous structures and indiscernible sequence invariants.
\newblock {\em arXiv preprint arXiv:2405.08211}, 2024.

\bibitem{bell2024invariant}
Jason Bell, Rahim Moosa, and Matthew Satriano.
\newblock On invariant rational functions under rational transformations.
\newblock {\em Selecta Mathematica}, 30(3):53, 2024.

\bibitem{bell2010dixmier}
Jason Bell, Daniel Rogalski, and Susan~J Sierra.
\newblock The {D}ixmier-{M}oeglin equivalence for twisted homogeneous
  coordinate rings.
\newblock {\em Israel journal of mathematics}, 180(1):461--507, 2010.

\bibitem{bisi2024some}
Cinzia Bisi, Jonathan~D Hauenstein, and Tuyen~Trung Truong.
\newblock Some interesting birational morphisms of smooth affine quadric
  3-folds.
\newblock {\em Nonlinearity}, 37(7):075006, 2024.

\bibitem{cameron1995permutation}
Peter~J Cameron.
\newblock Permutation groups, handbook of combinatorics, vol. 1, 2, 1995.

\bibitem{casale2022strong}
Guy Casale, Matthew Devilbiss, James Freitag, and Joel Nagloo.
\newblock Strong minimality of triangle functions.
\newblock {\em arXiv preprint arXiv:2205.08684}, 2022.

\bibitem{chatzidakis2005model}
Zo{\'e} Chatzidakis.
\newblock Model theory of difference fields.
\newblock {\em The Notre Dame lectures: Lecture Notes in Logic}, 18:45--96,
  2005.

\bibitem{chatzidakis2015model}
Zo{\'e} Chatzidakis.
\newblock Model theory of fields with operators-a survey.
\newblock {\em Logic Without Borders}, 18:91--114, 2015.

\bibitem{Chatzidakis99modeltheory}
Zo\'e Chatzidakis and Ehud Hrushovski.
\newblock Model theory of difference fields.
\newblock {\em Trans. Amer. Math. Soc}, 351:2997--3071, 1999.

\bibitem{ChHr-AD1}
Zo{\'e} Chatzidakis and Ehud Hrushovski.
\newblock Difference fields and descent in algebraic dynamics, i.
\newblock {\em Journal of the Institute of Mathematics of Jussieu}, 7:653--686,
  2008.

\bibitem{chatzidakis2008difference}
Zo{\'e} Chatzidakis and Ehud Hrushovski.
\newblock Difference fields and descent in algebraic dynamics. ii.
\newblock {\em Journal of the Institute of Mathematics of Jussieu},
  7(4):687--704, 2008.

\bibitem{devilbiss2023generic}
Matthew DeVilbiss and James Freitag.
\newblock Generic differential equations are strongly minimal.
\newblock {\em Compositio Mathematica}, 159(7):1387--1412, 2023.

\bibitem{duan2025algebraic}
Yutong Duan and Joel Nagloo.
\newblock Algebraic independence of the solutions of the classical
  {L}otka-{V}olterra system.
\newblock {\em arXiv preprint arXiv:2502.17194}, 2025.

\bibitem{oberwolfachFdeg}
James Freitag.
\newblock Forking degree and the {B}orovik-{C}herlin conjecture.
\newblock {\em Mini-Workshop: Topological and Differential Expansions of
  o-minimal Structures, Oberwolfach Reports}, 19(4):3028--3030, 2023.

\bibitem{freitag2022any}
James Freitag, R{\'e}mi Jaoui, and Rahim Moosa.
\newblock When any three solutions are independent.
\newblock {\em Inventiones mathematicae}, 230(3):1249--1265, 2022.

\bibitem{freitag2023degre}
James Freitag, R{\'e}mi Jaoui, and Rahim Moosa.
\newblock The degree of nonminimality is at most 2.
\newblock {\em Journal of Mathematical Logic}, 23(03):2250031, 2023.

\bibitem{freitag2025finite}
James Freitag, L{\'e}o Jimenez, and Rahim Moosa.
\newblock Finite-dimensional differential-algebraic permutation groups.
\newblock {\em Journal of the Institute of Mathematics of Jussieu},
  24(2):603--626, 2025.

\bibitem{freitag2021bounding}
James Freitag and Rahim Moosa.
\newblock Bounding nonminimality and a conjecture of {B}orovik-{C}herlin.
\newblock {\em Journal of the European Mathematical Society}, 27(2):589--613,
  2025.

\bibitem{freitag2025algebraic}
James Freitag and Joel Nagloo.
\newblock Algebraic relations between solutions of {P}ainlev{\'e} equations.
\newblock {\em Journal of the London Mathematical Society}, 111(6):e70190,
  2025.

\bibitem{ghioca2017dynamical}
Dragos Ghioca, Holly Krieger, Khoa~Dang Nguyen, and Hexi Ye.
\newblock The dynamical {A}ndr{\'e}--{O}ort conjecture: unicritical
  polynomials.
\newblock {\em Duke Math. J.}, 166(1):1--25, 2017.

\bibitem{ghioca2018algebraic}
Dragos Ghioca and Junyi Xie.
\newblock Algebraic dynamics of skew-linear self-maps.
\newblock {\em Proceedings of the American Mathematical Society},
  146(10):4369--4387, 2018.

\bibitem{ghioca2020dynamical}
Dragos Ghioca and Junyi Xie.
\newblock The dynamical {M}ordell--{L}ang conjecture for skew-linear self-maps.
  appendix by michael wibmer.
\newblock {\em International Mathematics Research Notices},
  2020(21):7433--7453, 2020.

\bibitem{ghioca2018dynamical}
Dragos Ghioca and Hexi Ye.
\newblock A dynamical variant of the {A}ndr{\'e}-{O}ort conjecture.
\newblock {\em International Mathematics Research Notices}, 2018(8):2447--2480,
  2018.

\bibitem{hrushovski1994groups}
Ehud Hrushovski and Anand Pillay.
\newblock Groups definable in local fields and pseudo-finite fields.
\newblock {\em Israel Journal of Mathematics}, 85:203--262, 1994.

\bibitem{kamensky2024binding}
Moshe Kamensky and Rahim Moosa.
\newblock Binding groups for algebraic dynamics.
\newblock {\em arXiv preprint arXiv:2405.06092}, 2024.

\bibitem{karhumaki2025primitive}
Ulla Karhum{\"a}ki and Nicholas Ramsey.
\newblock Primitive pseudo-finite permutation groups of finite su-rank.
\newblock {\em arXiv preprint arXiv:2504.08501}, 2025.

\bibitem{krieger2017coho}
Holly Krieger and Paul Reschke.
\newblock Cohomological conditions on endomorphisms of projective varieties.
\newblock {\em Bulletin de la SMF}, 145(3):449--468, 2017.

\bibitem{AliceTom}
Alice Medvedev and Thomas Scanlon.
\newblock Invariant varieties for polynomial dynamical systems.
\newblock {\em Annals of Mathematics}, 179(1):81--177, 2014.

\bibitem{nishioka2018differential}
Seiji Nishioka.
\newblock Differential transcendence of solutions of difference {R}iccati
  equations and {T}ietze's treatment.
\newblock {\em Journal of Algebra}, 511:16--40, 2018.

\bibitem{pink2002hrushovski}
Richard Pink and Damian Roessler.
\newblock On {H}rushovski's proof of the {M}anin-{M}umford conjecture.
\newblock {\em Proceedings of the International Congress of Mathematicians},
  1:539--546, 2002.

\bibitem{pink2004psi}
Richard Pink and Damian Roessler.
\newblock On $\psi$-invariant subvarieties of semiabelian varieties and the
  {M}anin-{M}umford conjecture.
\newblock {\em Journal of Algebraic Geometry}, 13(ARTN 4), 2004.

\bibitem{point1999ultraproducts}
Fran{\c{c}}oise Point.
\newblock Ultraproducts and {C}hevalley groups.
\newblock {\em Archive for Mathematical Logic}, 38(6):355--372, 1999.

\bibitem{zou2020pseudo}
Zou Tingxiang.
\newblock Pseudofinite primitive permutation groups acting on one-dimensional
  sets.
\newblock {\em Fundamenta Mathematicae}, 250:117--149, 2020.

\bibitem{xie2024algebraicity}
Junyi Xie.
\newblock Algebraicity criteria, invariant subvarieties and transcendence
  problems from arithmetic dynamics.
\newblock {\em Peking Mathematical Journal}, 7(1):345--398, 2024.

\bibitem{xie2025existence}
Junyi Xie.
\newblock The existence of {Z}ariski dense orbits for endomorphisms of
  projective surfaces.
\newblock {\em Journal of the American Mathematical Society}, 38(1):1--62,
  2025.

\bibitem{zhang2006distributions}
Shou-Wu Zhang.
\newblock Distributions in algebraic dynamics.
\newblock {\em Survey in Differential Geometry}, 10:381--430, 2006.

\end{thebibliography}
\end{document}